\documentclass[12pt]{article}
\usepackage{mathrsfs}
\usepackage{graphicx}
\usepackage{amsmath}
\usepackage{amssymb}
\usepackage{amsthm}

\usepackage{thmtools}
\usepackage{amsfonts}

\usepackage{bbding}
\usepackage{CJK,CJKnumb}
\usepackage{comment}
\usepackage{mathtools}
\usepackage{xcolor}
\usepackage{authblk}

\usepackage{etoolbox}
\patchcmd{\thebibliography}{\chapter*}{\section*}{}{}
\usepackage{ltablex}

\usepackage[nospace,noadjust]{cite}
\usepackage{calc}
\usepackage{geometry}
\usepackage{cite}
\usepackage{tikz-cd}
\usepackage{tikz}
\usetikzlibrary{matrix}

\geometry{left=3.0cm,right=3.0cm,top=2.5cm,bottom=2.5cm}

\usepackage{indentfirst}
\usepackage{longtable}

\allowdisplaybreaks

\numberwithin{equation}{section} %\usepackage{geometry}

\newtheorem{definition}{Definition}[section]

\newtheorem{theorem}[definition]{Theorem}

 % "letter-numbered" theorems

%\renewcommand*{\thetheorem}{\Alph{theorem}}
\newtheorem{proposition}[definition]{Proposition}
\newtheorem{corollary}[definition]{Corollary}
\newtheorem{lemma}[definition]{Lemma}
\newtheorem{conjecture}[definition]{Conjecture}

\begin{comment}

\declaretheoremstyle[
spaceabove=3pt, spacebelow=3pt,
headfont=\normalfont\boldshape,
notefont=\normalfont, notebraces={(}{)},
bodyfont=\normalfont,
%postheadspace=\newline,
qed=\qedsymbol
]{mystyle}
\declaretheorem[style=mystyle]{example}[definition]
\end{comment}
\newtheorem{example}[definition]{Example}
%\theorembodyfont{\normalfont}
\newtheorem*{remark}{Remark}

\newcommand{\Gal}{\mathrm{Gal}}

\setcounter{secnumdepth}{3} \setcounter{tocdepth}{1}
\title {A Gap Principle for Subvarieties with Finitely Many Periodic Points}
\author[1]
{
Keping Huang
\thanks{keping.huang@rochester.edu}
}

\affil[1]{Department of Mathematics, University of Rochester}
%{ \emailaddress{keping.huang@rochester.edu}}
%\footnotemark[1]}
\date{\vspace{-4em}}

\begin{document}
\maketitle

\def\Z{{\bf Z}}
%{Split Case, over $ \mathbb{C}$}  \hfill Authors
%\vspace{1em}

%%PLEASE PUT YOUR ABSTRACT HERE
\begin{abstract}
%In this paper, we prove a bound for the greatest common divisor of iterates of polynomial functions with integral coefficients.
Let $f:X\rightarrow X$ be a quasi-finite endomorphism of an algebraic variety $X$ defined over a number field $K$ and fix an initial point $a\in X$. 
We consider a special case of the dynamical Mordell-Lang Conjecture, 
where the subvariety $V$ contains only finitely many periodic points and does not contain any positive-dimensional periodic subvariety. 
We show that the set $\{n\in \mathbb{N}~|~f^n(a) \in V \}$ satisfies a strong gap principle. 
\end{abstract}
%%THE END OF ABSTRACT

\section{Introduction}

The dynamical Mordell-Lang Conjecture predicts that given an endomorphism $f: X\rightarrow X$ of a complex quasi-projective variety $X$,
for any point $a\in X$ and any subvariety $V\subsetneq X$,
the set $S_V:=\{n\in\mathbb{N}_{\ge 0}~|~f^{n}(a) \in V\}$ is a finite union of arithmetic progressions (sets of the form
$\{a, a+d, a+ 2d,\dots \} $ with $a,d\in \mathbb{N}_{\ge 0})$.
The dynamical Mordell-Lang Conjecture was proposed in \cite{GT09}.
See also \cite{Bel06} and \cite{Den92} for earlier works.
In the case of \'etale maps we know that the dynamical Mordell-Lang Conjecture is true
(see \cite{bgt10} and \cite{BGT16}). 
Xie proved in \cite{Xie15} the dynamical Mordell-Lang Conjecture for polynomial endomorphisms of the affine plane.

The $p$-adic interpolation method is one of the most important tools for tackling the dynamical Mordell-Lang Conjecture. The basic idea is to construct a $p$-adic analytic function $G: \mathbb{Z}_p \rightarrow X$ such that $G(n) = f^n(a)$. This allows one to use the tools from $p$-analytic function to calculate the set $S_V$. 

However, in general, it's unknown whether such an interpolation exists for any endomorphism $f$ or any initial point $a$. For some evidence of its nonexistence see \cite{BGHKTT13}. Then we might not be able to prove the dynamical Mordell-Lang Conjecture using the $p$-adic interpolation in those cases. However, by a discovery of Bell, Ghioca and Tucker (see \cite{BGT16}, Theorem 11.11.3.1 or Theorem \ref{Bell} below), we might expect an approximating function $G$ such that $G(n)$ approximates $f^n(a)$ very closely. 
Suppose $Q$ is a polynomial vanishing on $V$, 
then the roots $Q\circ G$ give much restriction to those $n$ 
such that $f^n(a)\in V$. 
This allows us to prove a weaker version of the dynamical Mordell-Lang Conjecture in certain cases.

To apply the approximation result, we need to exclude a case where the orbit converges $p$-adically to a periodic point on $V$. One way to guarantee this is to ensure that the residue class of the orbit not meet the periodic point mod $p$ after a certain number of iterates. 
Theorem 1 of \cite{BGHKTT13} gives this kind of avoidance result for 
$\mathbb{P}^1$. 
In this paper, we generalize this result to quasi-finite endomorphisms of quasi-projective varieties. 
This enables us to prove a strong gap principle under certain conditions. 

The main theorem of this paper is the following theorem. For the definition of nonsingular variety, see Section 1.5 of \cite{Har77}. 

\begin{theorem}\label{main}
  Suppose $f:X\rightarrow X$ is a quasi-finite endomorphism of a nonsingular variety $X$. Let $V\subseteq X$ be a subvariety. 
  Assume that $X, V,f$ are all defined over a number field $K$. 
  Assume that there are only finitely many periodic points in $V(\overline{K})$ and that $V$ does not contain a positive-dimensional periodic subvariety. 
  Assume that $a \in X(K)$ is not preperiodic under $f$, then
the set $S_V:= \{ n\in \mathbb{N}_{\ge 0}~|~f^n(a) \in V(K) \}$
%is either a finite union of arithmetic progressions or
has the property that 
$$\#\{i\le n:~i\in S_V\} = o(\log^{(m)}(n))$$
for any fixed $m\in \mathbb{N}_{>0}$ (where $\log^{(m)}$ is the $m$-th iterated logarithmic function).  
\end{theorem}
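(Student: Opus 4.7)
The plan is to combine the $p$-adic approximation theorem of Bell--Ghioca--Tucker (Theorem \ref{Bell}) with a generalized avoidance lemma and a Weierstrass-preparation argument, iterated to produce the iterated-logarithm bound.

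First I would fix a prime $\mathfrak{p}$ of $\mathcal{O}_K$ above a rational prime $p$ at which everything has good reduction: $X$, $V$, $f$, the finite set $\Pi$ of periodic points of $V(\overline{K})$, and the starting point $a$ all extend over $\mathcal{O}_{K,\mathfrak{p}}$, and $f$ remains quasi-finite on the special fiber. After replacing $f$ by a suitable iterate and $a$ by a shifted starting point, I may assume that the whole orbit lies in a single residue class modulo $\mathfrak{p}$, reducing the problem to the analysis of one residue disk.

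Second, I would establish the promised generalization of Theorem 1 of \cite{BGHKTT13} to quasi-finite endomorphisms of nonsingular quasi-projective varieties: for a suitably chosen $\mathfrak{p}$, the reductions mod $\mathfrak{p}$ of the orbit $\{f^n(a)\}$ eventually avoid those of the periodic points in $\Pi$. The hypothesis that $V$ contains no positive-dimensional periodic subvariety is the key geometric input here: a $\mathfrak{p}$-adic accumulation of orbit points inside $V$ at a periodic point would, by quasi-finiteness of $f$ and a formal-neighborhood argument, force such a subvariety to exist. The non-preperiodicity of $a$ then enters through a Chebotarev-style density argument to select $\mathfrak{p}$ for which the residue of $a$ avoids the forward reduction orbit of $\Pi$. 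I expect this step to be the main technical obstacle, since the higher-dimensional geometry and the potential ramification of $f$ preclude a direct import of the one-variable arguments of \cite{BGHKTT13}.

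Third, once avoidance is secured, I apply Theorem \ref{Bell} inside the chosen residue disk to obtain, for each $k \ge 1$, a $\mathfrak{p}$-adic analytic map $G_k : \mathbb{Z}_p \to X(K_\mathfrak{p})$ with $G_k(n)$ congruent to $f^n(a)$ modulo $\mathfrak{p}^k$. Composing with local polynomials $Q$ cutting out $V$ yields analytic functions $h_k = Q \circ G_k$ whose approximate zero set contains the relevant portion of $S_V$. The avoidance step guarantees $h_k \not\equiv 0$. A Weierstrass preparation of $h_k$ then bounds the count $\#\{i \le n : i \in S_V\}$ by $O(\log n)$. To upgrade this to $o(\log^{(m)} n)$ for every fixed $m$, I would iterate: for each hit $n_0 \in S_V$, re-run the entire argument with $(f, f^{n_0}(a))$ in place of $(f, a)$, exploiting that $f^{n_0}(a)$ is already $\mathfrak{p}^k$-close to $V$ to improve the Newton-polygon estimate by another factor of $\log$. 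Repeating this bootstrap $m$ times — the number of rounds being allowed to depend on $m$ — yields the stated iterated-log bound. The delicate point is to keep the Weierstrass constants uniform across stages so that each round genuinely gains a logarithm rather than absorbing it into growing error terms.
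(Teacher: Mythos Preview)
Your outline has the right coarse architecture (avoidance lemma $+$ $p$-adic approximation of the orbit $+$ compose with an equation of $V$), but the mechanism you propose for extracting the iterated-log bound is not the one that actually works, and the bootstrap you sketch in its place is a genuine gap.

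The key point you are missing is the \emph{shape} of the approximation in Theorem \ref{Bell}: it produces a single analytic $G:\mathfrak{o}_v\to\mathfrak{o}_v^N$ with
\[
\|G(n)-F^n(a)\|\le p^{-nc},
\]
i.e.\ an error that decays \emph{exponentially in $n$}. It is not a family $G_k$ with a fixed congruence level $\mathfrak p^k$. Once you have this, set $L=Q\circ G$ for a $Q$ vanishing on $V$ with $L\not\equiv 0$; then $n\in S_V$ forces $|L(n)|_\mathfrak p\le p^{-nc}$. Covering $\mathbb Z_p$ by finitely many disks each containing at most one zero $\eta$ of $L$, of order $d$, one gets $|n-\eta|_\mathfrak p\le p^{-n/d+O(1)}$ for the $n\in S_V$ in that disk, hence for consecutive hits $n_j<n_{j+1}$ one has $|n_{j+1}-n_j|_\mathfrak p\le p^{-n_j/d+O(1)}$ and therefore $n_{j+1}-n_j\ge C^{n_j}$ for some $C>1$. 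This exponential gap already yields $\#\{i\le n:i\in S_V\}=o(\log^{(m)}n)$ for every $m$ in one stroke; no iteration or bootstrap is needed. Your proposed ``re-run the argument from $f^{n_0}(a)$'' does not come with any reason the Weierstrass data improve, and a fixed-level approximation $G_k(n)\equiv f^n(a)\pmod{\mathfrak p^k}$ only tells you $|h_k(n)|\le p^{-k}$, which constrains $n$ to finitely many residue classes rather than giving an $O(\log n)$ count.

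Two smaller corrections on the role of the hypotheses. First, the avoidance result (the analogue of \cite{BGHKTT13}) is proved by a Chebotarev argument on the residue-degree growth of the fields generated by $M$-th preimages of the $\gamma_i$; it does not use the ``no positive-dimensional periodic subvariety'' hypothesis at all, and it is not a formal-neighborhood argument. Second, that hypothesis is instead what guarantees $L=Q\circ G\not\equiv 0$: if $G(n)\in V'$ for all large $n$, then the Zariski closure of $\{G(n)\}$ would be a positive-dimensional $f$-periodic subvariety of $V$. The avoidance lemma is used separately, to rule out the degenerate case where $G$ is constant, i.e.\ where $F^n(a)$ converges $\mathfrak p$-adically to a fixed point lying on $V$.
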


This theorem is an analog of Theorem 1.4 of \cite{BGKT10}. 
In a broader sense, both theorems can be thought of as dynamical analogs of the 
gap principle in Diophantine geometry. See, for example, \cite{DR55} and \cite{Mum65} for the Diophantine case. 
If $C$ is a curve of genus greater than 1, 
Mumford showed that if we order the rational points of $C$ according to Weil height, then 
their Weil height grows exponentially. 
Faltings later proved in \cite{Fal83} that the number of rational points of $C$ is actually finite. 
Here we prove the density of $S_V$ by proving a gap between consecutive numbers in certain arithmetic progressions. 
One difference is that in both results in dynamics the ``gaps" are much larger than in the Diophantine geometry. 
But our ultimate goal is also to show that $S_V$ is finite.

We say that a morphism $f: X \rightarrow X$ of a normal
variety $X$ is {\em polarizable} if there is an ample divisor $L$ of $X$
such that $f^* L \cong L^{\otimes d}$ for some $d > 1$. 
Our result is related with the following. 
\begin{conjecture}[Dynamical Manin-Mumford, \cite{Zha95}]\label{dmm}
Suppose $f: X\rightarrow X$ is a polarizable endomorphism of a variety $X$. 
If a subvariety $V\subset X$ contains a Zariski dense set of preperiodic points, then $V$ is itself preperiodic. 
\end{conjecture}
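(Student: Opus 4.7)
The plan is to run the $p$-adic approximation machinery of Bell--Ghioca--Tucker, but to first establish a new avoidance result generalizing Theorem~1 of \cite{BGHKTT13} to the setting of a quasi-finite endomorphism of a nonsingular variety. The avoidance is needed because Theorem~\ref{Bell} produces a $p$-adic analytic approximant $G(n) \approx f^n(a)$, and this is only useful if we can rule out the possibility that $f^n(a)$ converges $\mathfrak{p}$-adically to one of the periodic points inside $V$.

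First, I would enumerate the finitely many periodic points $P_1,\dots,P_\ell \in V(\overline{K})$ and seek a prime $\mathfrak{p}$ of good reduction for $X$, $f$, $a$, and each $P_i$ such that there is an integer $N_0$ with $\overline{f^n(a)} \not\equiv \overline{P_i} \pmod{\mathfrak{p}}$ for all $n \ge N_0$ and all $i$. The argument should parallel the one-dimensional case: nonsingularity of $X$ together with quasi-finiteness of $f$ let one \'etale-locally invert $f$ near each $P_i$, translating the non-preperiodicity of $a$ into a mod-$\mathfrak{p}$ condition that can be forced for some $\mathfrak{p}$ by a pigeonhole/Chebotarev-style density argument over varying primes.

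Given the avoidance, I would partition $\mathbb{N}_{\ge N_0}$ into residue classes $r + N\mathbb{Z}$ on each of which $f^{r+nN}(a) \pmod{\mathfrak{p}}$ is constant and distinct from every $\overline{P_i}$. Theorem~\ref{Bell} then supplies a $\mathfrak{p}$-adic analytic function $G_r : \mathbb{Z}_p \to X(\mathbb{C}_p)$ with $|G_r(n) - f^{r+nN}(a)|_{\mathfrak{p}}$ tending to zero in a quantitatively controlled way. Letting $F_1,\dots,F_s$ be local defining equations for $V$ in a neighborhood of the relevant residue, the compositions $h_{r,j}(z) := F_j(G_r(z))$ are $p$-adic analytic on $\mathbb{Z}_p$. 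The hypothesis that $V$ contains no positive-dimensional periodic subvariety forces some $h_{r,j}$ to be not identically zero: otherwise $G_r(\mathbb{Z}_p) \subseteq V$, and since $f^N$ acts on the Zariski closure of this analytic arc, one extracts a positive-dimensional periodic subvariety contained in $V$, contradicting the hypothesis.

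The iterated-logarithm bound then emerges from a Weierstrass-preparation / Newton-polygon count. The condition $f^{r+nN}(a)\in V$ forces some $h_{r,j}(n)$ to be $\mathfrak{p}$-adically smaller than the Bell approximation error, i.e.\ $|h_{r,j}(n)|_{\mathfrak{p}} < p^{-\alpha(n)}$ for a function $\alpha(n) \to \infty$ whose growth is dictated by Theorem~\ref{Bell}. Writing each $h_{r,j}$ as a unit times a distinguished polynomial of bounded degree, the set of $z \in \mathbb{Z}_p$ with $|h_{r,j}(z)|_{\mathfrak{p}} < p^{-M}$ is a union of finitely many disks whose radii shrink in $M$; counting integer points in those disks and summing over the finitely many residue classes gives the density bound, and iterating the refinement (raising the Bell precision and re-running the count on the tail) upgrades $o(\log^{(m)} n)$ to $o(\log^{(m+1)} n)$. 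The main obstacle is the first step: generalizing the BGHKTT avoidance from $\mathbb{P}^1$ to a quasi-finite endomorphism of a nonsingular variety, where one cannot rely on explicit one-variable normal forms and must instead vary $\mathfrak{p}$ carefully to defeat the possibility that $\overline{f^n(a)} \equiv \overline{P_i} \pmod{\mathfrak{p}}$ for every choice of prime; once avoidance is in hand, the Newton-polygon count is technical but standard.
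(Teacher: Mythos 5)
The statement you were asked about is Conjecture~\ref{dmm}, the Dynamical Manin--Mumford Conjecture of Zhang: if $V\subset X$ contains a Zariski dense set of preperiodic points of a polarizable endomorphism $f$, then $V$ is itself preperiodic. Your proposal does not address this claim at all. What you have sketched is, essentially, the paper's argument for Theorem~\ref{main} (and Theorem~\ref{key}): an avoidance result for the reduction of the orbit modulo a well-chosen prime, followed by the Bell--Ghioca--Tucker approximate $p$-adic interpolation and a zero-counting argument to bound the density of the return set $S_V=\{n: f^n(a)\in V\}$. That machinery concerns the return times of a \emph{single} orbit to $V$ and yields a gap/sparseness statement; it says nothing about the global structure of the set of preperiodic points on $V$, and in particular it cannot conclude that $V$ is preperiodic when its preperiodic points are Zariski dense. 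The logical direction is in fact the reverse of what you would need: the paper \emph{uses} (cases of) Manin--Mumford-type input --- finiteness of periodic points on $V$ and absence of positive-dimensional periodic subvarieties --- as a hypothesis, it does not derive such statements from the gap principle.

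There is a second, decisive issue: the statement is a conjecture, not a theorem of the paper, and the paper explicitly notes (citing \cite{GTZ11}) that Conjecture~\ref{dmm} is \emph{false} as stated; only a refined version remains open. So no proof of the statement as given can be correct, and any proposal claiming to prove it must contain an error somewhere. In your case the error is simply that the argument never engages with the hypothesis ``preperiodic points are Zariski dense in $V$'' nor with the conclusion ``$V$ is preperiodic''; every step is about bounding $\#\{i\le n: i\in S_V\}$ for a fixed non-preperiodic starting point $a$, which is the content of Theorem~\ref{main}, a different (and much weaker) kind of assertion.
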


In the case when $f$ is split rational map on $(\mathbb{P}^1)^n$, the dynamical Manin-Mumford Conjecture is proved in \cite{GNY17}. 
Unfortunately, there is a counterexample in \cite{GTZ11} to Conjecture \ref{dmm}. 
In the same paper, the authors also proposed a more refined conjecture.

If the dynamical Manin-Mumford Conjecture is true for $f:X\rightarrow X$ and $V\subset X$, then the condition that $V$ does not contain a positive-dimensional subvareity would imply that the set of preperiodic points on $V$ is not Zariski dense. 
In particular, if $V$ is a curve, then the condition that $V$ is not periodic would imply that there are finitely many periodic points on $V$. 
For more about the dynamical Manin-Mumford Conjecture, see \cite{Zha06}. 
On the other hand, \cite{Fak03} shows that if $f$ is polarizable, 
then the subset of $X(\overline{K})$ consisting of periodic
points of $f$ is Zariski dense in $X$. 

By Theorem \ref{main} and the above discussion, we have the following 

\begin{theorem}\label{unconditional}
  Let $X=(\mathbb{P}^1)^n$. 
  Suppose $f: X \rightarrow X$ is given by
  $(x_1,\dots,x_n)\mapsto (f_1(x_1),\dots, f_n(x_n))$ where each $f_i$ is one-variable rational function of degree at least $2$. 
  Let $V\subseteq X$ be a non-periodic curve. 
  Suppose $X, V,f$ are all defined over a number field $K$. 
Then the set $S_V:= \{ n\in \mathbb{N}_{\ge 0}~|~f^n(a) \in V(K) \}$
%is either a finite union of arithmetic progressions or
has the property that 
$$\#\{i\le n:~i\in S_V\} = o(\log^{(m)}(n))$$
for any fixed $m\in \mathbb{N}_{>0}$ (where $\log^{(m)}$ is the $m$-th iterated logarithmic function).  
\end{theorem}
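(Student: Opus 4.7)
The plan is to deduce Theorem~\ref{unconditional} directly from Theorem~\ref{main} by verifying each of its hypotheses in the split setting. The structural conditions are immediate: $X=(\mathbb{P}^1)^n$ is a smooth projective variety; each $f_i:\mathbb{P}^1\to\mathbb{P}^1$ is a non-constant rational map and hence extends to a morphism, and since $\deg f_i\ge 2$ every $f_i$ is a finite morphism, so $f=f_1\times\cdots\times f_n$ is a finite (in particular quasi-finite) endomorphism of $X$; finally, since $V$ is an irreducible curve, its only positive-dimensional closed subvariety is $V$ itself, and hence the hypothesis that $V$ is not periodic immediately supplies the ``no positive-dimensional periodic subvariety'' condition required by Theorem~\ref{main}.

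The only substantive hypothesis of Theorem~\ref{main} that remains to check is that $V(\overline{K})$ contains only finitely many periodic points of $f$, and this is the main obstacle. I plan to handle it by contradiction. Suppose $V$ contains infinitely many periodic points; since $V$ is an irreducible curve, any infinite subset of $V$ is Zariski dense, so the set of preperiodic points on $V$ is Zariski dense in $V$. Applying the dynamical Manin--Mumford Conjecture for split endomorphisms of $(\mathbb{P}^1)^n$, established by Ghioca--Nguyen--Ye in~\cite{GNY17}, one then concludes that $V$ must be preperiodic. The delicate remaining step is to upgrade ``preperiodic'' to ``periodic'': I would examine the finite chain of images $V,\,f(V),\,f^2(V),\dots$, which stabilizes into a cycle $f^{\ell}(V),\dots,f^{\ell+r-1}(V)$ of irreducible curves, and use either the explicit classification of preperiodic subvarieties for split maps furnished by~\cite{GNY17}, or a direct argument exploiting the product structure of $f$, to force $\ell=0$, contradicting the hypothesis that $V$ is non-periodic.

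Once finiteness of periodic points on $V$ is in hand, all hypotheses of Theorem~\ref{main} are satisfied (under the tacit assumption that the initial point $a$ is not preperiodic, without which $S_V$ could contain a full arithmetic progression and the stated bound would necessarily fail), and applying Theorem~\ref{main} delivers the desired growth estimate $\#\{i\le n:~i\in S_V\}=o(\log^{(m)}(n))$ for every fixed $m\in\mathbb{N}_{>0}$.
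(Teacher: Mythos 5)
Your proposal is correct and follows essentially the same route as the paper, which likewise deduces Theorem \ref{unconditional} from Theorem \ref{main} by invoking the dynamical Manin--Mumford theorem for split rational maps on $(\mathbb{P}^1)^n$ from \cite{GNY17} to get finiteness of periodic points on the non-periodic curve $V$ (and the paper, too, tacitly assumes the initial point $a$ is not preperiodic, as you rightly flag). The one step you defer --- upgrading ``$V$ preperiodic'' to ``$V$ periodic'' --- closes easily without any classification: a periodic point $\gamma\in V$ of period $k$ satisfies $\gamma=f^{Nk}(\gamma)\in f^{Nk}(V)$, so for $Nk\ge\ell$ it lies on one of the finitely many irreducible curves $f^{\ell}(V),\dots,f^{\ell+r-1}(V)$ in the eventual cycle; if $V$ coincided with none of these it would meet each in only finitely many points (two distinct irreducible curves have finite intersection), forcing finiteness of periodic points on $V$ directly, while if $V$ equals one of them it is periodic, contradicting the hypothesis.
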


The avoidance result below is a key step towards the proof of Theorem \ref{main}. 
It is useful in many other contexts. 
It is a generalization of Theorem 5 of \cite{BGHKTT13} and its
proof goes parallel with that of the latter theorem, with simplifications in certain steps. 

\begin{theorem}\label{key}
  Suppose $f:X\rightarrow X$ is a quasi-finite endomorphism of a nonsingular variety $X$. Let $V\subseteq X$ be a subvariety. 
  Assume that $X, V,f$ are all defined over a number field $K$. 
  Assume that there are only finitely many periodic points $\gamma_1,\dots, \gamma_r$ in $V(\overline{K})$ and that $V$ does not contain a positive-dimensional periodic subvariety. 
%  Assume that $a \in X(K)$ is not preperiodic under $f$. 
  Then there is a finite set of places $S$ of $K$, a positive integer $M$ and a set $\mathcal{P}$ of primes of $K$ with positive density such that for all $\mathfrak{p}'\in \mathcal{P}$, for all $m\ge M$, for all $1 \le i \le r$, and for all $a\in X(\mathcal{O}_{S,K})$ not preperiodic, we have for the reductions 
$f_\mathfrak{p'} ^m(a_\mathfrak{p'})\neq (\gamma_i)_{\mathfrak{p}'}$. 
\end{theorem}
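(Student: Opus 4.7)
The plan is to follow the strategy of Theorem 5 of \cite{BGHKTT13} (the $\mathbb{P}^1$ case) and adapt its Chebotarev-density argument to the higher-dimensional quasi-finite setting. I would first enlarge $S$ to include the archimedean places, the places of bad reduction of $X$, $V$, and $f$, and any places where some $\gamma_i$ fails to reduce to a nonsingular integral point of $X$. Replacing $f$ by its iterate $f^N$, where $N = \mathrm{lcm}(N_1,\ldots,N_r)$ is a common period of the $\gamma_i$, I may assume each $\gamma_i$ is a fixed point of $f$; this is harmless because the conclusion concerns all sufficiently large $m$.

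The central tool is Chebotarev density applied to the tower of closed subschemes $T_i^{(k)} := (f^k)^{-1}(\gamma_i) \subseteq X$. By quasi-finiteness of $f$, each $T_i^{(k)}$ is zero-dimensional, and all of its $\overline{K}$-points are preperiodic by construction. The group $\Gal(\overline{K}/K)$ acts on these points, and I would take $\mathcal{P}$ to be the set of primes $\mathfrak{p}'$ of good reduction whose Frobenius acts, on the non-$K$-rational portion of $T_i^{(k)}$ for each $1 \le i \le r$ and each $k$ up to an effective bound $K_0$, in a way that prevents spurious coincidences of reductions between Galois-conjugate preimages and $K$-rational ones. Because this imposes only finitely many conditions on Frobenius in finitely many finite extensions of $K$, positive density of $\mathcal{P}$ follows from Chebotarev. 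The hypothesis that $V$ contains no positive-dimensional periodic subvariety is what keeps the tower discrete and the Galois data a finite combinatorial problem.

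Now suppose for contradiction that $f^m_{\mathfrak{p}'}(a_{\mathfrak{p}'}) = (\gamma_i)_{\mathfrak{p}'}$ for some $\mathfrak{p}' \in \mathcal{P}$, $m \ge M$ (with $M$ taken to be $K_0$ or a small multiple thereof), and a non-preperiodic $a \in X(\mathcal{O}_{S,K})$. Then $a_{\mathfrak{p}'}$ lies in the reduction of $T_i^{(m)}$; tracing the orbit of $a_{\mathfrak{p}'}$ backwards through the tower using the Chebotarev conditions forces the residue class at each level to coincide with that of a genuine $K$-rational preperiodic point. A local analytic argument at $\gamma_i$---made possible by the nonsingularity of $X$ and the quasi-finiteness of $f$, which together yield formal coordinates in which $f$ linearizes---then upgrades these residue-class equalities to actual equalities of $K$-points, contradicting the non-preperiodicity of $a$. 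The main obstacle is exactly this upgrade step: in the $\mathbb{P}^1$ setting of \cite{BGHKTT13} it rests on one-variable multiplier calculus at $\gamma$ together with a separate argument handling the case where the multiplier is a root of unity, whereas in our setting the multivariate linearization at $\gamma_i$ must take its place. The simplification promised in the introduction should come from the fact that the nondegeneracy of $V$ combined with the Chebotarev choice of $\mathcal{P}$ makes the linearization data sufficiently generic to bypass the pathological root-of-unity cases that the one-dimensional argument had to address by hand.
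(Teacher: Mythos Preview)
Your proposal has a genuine gap at its core: you never establish that a Frobenius class satisfying your conditions \emph{exists}. Chebotarev gives positive density only once you have exhibited a nonempty union of conjugacy classes in $\Gal(F/K)$; saying ``impose finitely many conditions on Frobenius'' is not enough, because those conditions could in principle cut out the empty set. Concretely, you need an element $\sigma\in\Gal(F/K)$ that fixes no left coset $gH_i$ for any $i$, where $H_i$ is the stabilizer of an $M$-th preimage $\beta_i$ of some $\gamma_j$. You give no mechanism for producing such a $\sigma$.

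The paper's mechanism is the step you are missing. One first fixes a \emph{single} prime $\mathfrak{p}$ of $K$ (possibly ramified in $F$) and observes that $X_{\mathfrak{p}}(\kappa(\mathfrak{p}))$ is finite while the preimages $\eta_j\in f^{-1}(\gamma_i)\setminus\{\gamma_i\}$ reduce to non-periodic points mod $\mathfrak{p}$; hence for $M$ large no $\kappa(\mathfrak{p})$-rational point can hit $(\gamma_i)_{\mathfrak{p}}$ for the first time at step $M$. Translated into Galois language this says every $\beta_i$ has residue degree $>1$ over $\kappa(\mathfrak{p})$, and the paper's Corollary~\ref{unified} then extracts from the decomposition/inertia data at $\mathfrak{p}$ a lift $\sigma$ of a generator of $D_{\mathfrak{q}}/I_{\mathfrak{q}}$ which is fixed-point-free on every $\mathcal{T}_i$. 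Only now does Chebotarev apply, producing the positive-density set $\mathcal{P}$ of \emph{unramified} primes $\mathfrak{p}'$ with $\mathrm{Fr}_{\mathfrak{p}'}$ conjugate to $\sigma$. For such $\mathfrak{p}'$ no $\beta_i$ reduces to a $\kappa(\mathfrak{p}')$-point, so no $\kappa(\mathfrak{p}')$-point of $X_{\mathfrak{p}'}$ first reaches $(\gamma_i)_{\mathfrak{p}'}$ at step $M$; the shift $x\mapsto f^{m-M}(x)$ then rules out all $m\ge M$.

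Your final paragraph is also off track. No local analytic linearization at $\gamma_i$ is used anywhere in the proof of Theorem~\ref{key}; that machinery belongs to the later interpolation argument (Proposition~\ref{book}), not to the avoidance result. Likewise, the contradiction is not that $a$ turns out to be preperiodic: the conclusion is purely about $\kappa(\mathfrak{p}')$-points and never lifts back to $K$-points. The ``upgrade step'' you flag as the main obstacle does not appear, and the root-of-unity multiplier analysis from \cite{BGHKTT13} is precisely what the paper's argument bypasses by working with residue degrees rather than ramification indices.
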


We follow the idea of the proof in \cite{BGHKTT13} to prove Theorem \ref{key}. 
First, we find a single prime $\mathfrak{p}$ at which the reduction of $a$ does not hit $\gamma_i$ after the $M$-th iteration. 
Note that $\mathfrak{p}$ might ramify, but in contrast with the treatment in \cite{BGHKTT13} we use Corollary \ref{unified} to obtain a unified way to find a Frobenius ``coset'' in either ramified case or unramified case, and then we use the Chebotarev density theorem to find a family of primes with the same Frobenius conjugacy class, and hence prove the avoidance theorem.

For the proof of Theorem \ref{main}, after some reductions we use Theorem \ref{Bell} to find an approximate interpolating function $G$. This allows us to use the zeros of $G$ to obtain information about the set $S_V$. We need Theorem \ref{key} to avoid one case in which we cannnot say much about $S_V$. A key technical result is Proposition \ref{book}, 
which is a modification of Theorem 11.11.3.1 of \cite{BGT16}.

The Organization of this paper is as follows. 
Section 2 gives the basic notations and definitions, as well as some simple reductions. 
In Section 3, we prove the avoidance result. 
In Section 4, we apply the approximate $p$-adic interpolation to prove our main theorem.

%Let $\tilde{X}$ be the reduced variety mod $\mathfrak{p}$.                  
\section{Definitions and Preliminaries}\label{notation}

\begin{definition}
Suppose $\phi:X\rightarrow Y$ is a morphism of algebraic varieties. We say that $\phi$ is quasi-finite if each point $y\in Y$ has finitely many preimages. 
\end{definition}

We need the following simple observation.

\begin{lemma} \label{iter}
If Theorem \ref{main} holds for $f^k$ with initial points $a, f(a),f^2(a),\dots,f ^{k-1}(a)$.
Then Theorem \ref{main} holds for $f$ with initial point $a$. 
\end{lemma}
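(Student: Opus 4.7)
The plan is to split $S_V$ into residue classes modulo $k$ and apply the assumed bound to each class. Since $(f^k)^n(f^j(a)) = f^{kn+j}(a)$, for each $j \in \{0, 1, \ldots, k-1\}$ the integers $N \in S_V$ with $N \equiv j \pmod{k}$ correspond bijectively, via $N = kn + j$, to the elements of
$$S_V^{(j)} := \{n \in \mathbb{N}_{\ge 0} \mid (f^k)^n(f^j(a)) \in V(K)\}.$$
So I would write $S_V$ as the disjoint union of its $k$ residue classes, realised as shifted copies of the $S_V^{(j)}$.

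Next, for each $j$ the hypothesis, namely Theorem \ref{main} applied to $f^k$ with initial point $f^j(a)$, yields $\#\{i \le N \mid i \in S_V^{(j)}\} = o(\log^{(\ell)}(N))$ for every fixed $\ell \in \mathbb{N}_{>0}$. The elements of $S_V$ in residue class $j$ that are at most $n$ correspond to elements of $S_V^{(j)}$ at most $(n-j)/k \le n/k$, so summing the $k$ contributions gives
$$\#\{i \le n \mid i \in S_V\} \;\le\; \sum_{j=0}^{k-1} \#\{i \le n/k \mid i \in S_V^{(j)}\} \;=\; k \cdot o(\log^{(\ell)}(n/k)).$$

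The only remaining point is the asymptotic equivalence $\log^{(\ell)}(n/k) \sim \log^{(\ell)}(n)$ for fixed $k$, which I would verify by a short induction on $\ell$: the base case $\log(n/k) = \log n - \log k \sim \log n$ is immediate, and continuity of $\log$ at the limit propagates the equivalence through successive iterates. Since $k$ is a fixed constant, combining this with the previous display yields $\#\{i \le n \mid i \in S_V\} = o(\log^{(\ell)}(n))$, as required. There is no real obstacle here; the lemma is a bookkeeping reduction whose purpose is to allow the proof of Theorem \ref{main} to replace $f$ by an iterate $f^k$ whenever convenient, for instance after passing to a power that tames the Frobenius behaviour or realises particular periodic points as fixed points.
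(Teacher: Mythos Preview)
Your proof is correct and follows exactly the same approach as the paper's proof: split $S_V$ into its $k$ residue classes modulo $k$, identify each class with the return set for $f^k$ started at $f^j(a)$, and sum the $k$ resulting $o(\log^{(m)}(n))$ bounds. The paper states this in a single sentence (``a sum of $k$ numbers of size $o(\log^{(m)}(n))$ is still of size $o(\log^{(m)}(n))$''), while you have simply written out the details.
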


\begin{proof}
The number $k$ is independent of $m$ and $n$. 
Then the result is clear as a sum of $k$ numbers of size $o(\log^{(m)}(n))$ is still of size $o(\log^{(m)}(n))$. 
\end{proof}

Suppose $f:X \rightarrow X$ is a quasi-finite endomorphism of a variety $X$. Let $V \subset X$
be a subvariety. Assume that $X, V, f$ are all defined over a number field $K$. Suppose
$\gamma_1,\dots, \gamma_r$ are all the periodic points of $f$ on $V$. Replacing $f$ by an iterate we may
assume that all $\gamma_i$ are
fixed points. 
Note that by Lemma \ref{iter} this does not affect the gap property in Theorem \ref{main}. 
Extending $K$ if necessary we may assume that all $\gamma_i$ are defined over $K$. 
Suppose $S$ is a finite set containing all the infinity places of $K$ and all the finite places where the reduction of $X$ is not a nonsingular variety or the reduction of $f$ is not well-defined. 
Choose an $S$-integral model $\mathcal{X}$ of $X$, that is, a scheme $\mathcal{X}$ flat over the ring $\mathcal{O}_{S,K}$ of $S$-integers in $K$, 
such that the generic fiber $\mathcal{X} \times_{\mathcal{O}_{S,K}} K$ is isomorphic to $X$.

  %Fix such an integer $M$, and let $F/L$ be the extension obtained by adjoining all the coordinates of the points $\beta \in X(L)$. 
 % Since there are only finitely many $\mathfrak{p}$ such 
%  which do not affect density, 
%As in \cite{BGHKTT13}, it's enough to prove the result for a finite extension of $K$. 

At the expense of expanding the size of $S$ by a finite number, we may do a couple of simplifications. 
We may assume that $f$ can be extended to a morphism of schemes $f: \mathcal{X} \rightarrow \mathcal{X}$.
%and that the residue ramification index is equal to the ramification index over $K$.
%We can also suppose that the reduction of $f$ is defined at all places of $\mathcal{O}_{S,K}$. 
We may also assume that the points $\gamma_1,...,\gamma_r$ are defined over $\mathcal{O}_{S,K}$. Also denote them by $\gamma_1,...,\gamma_r$. 
Furthermore assume
$f^{-1}(\gamma_i)$ do not hit $\gamma_i$ modulo $\mathfrak{p}$ for all $\mathfrak{p} \in S$ and for all $1\le i \le r$. 
For a prime $\mathfrak{p}$ of $K$ outside $S$, denote by $(\gamma_1)_{\mathfrak{p}},...,(\gamma_r)_{\mathfrak{p}}, $ the extended points of $\gamma_1,...,\gamma_r$ on the special fiber ${X}_{\mathfrak{p}}:=\kappa(\mathfrak{p})\times_{\mathcal{O}_{S,K}} \mathcal{X}$, 
and denote by $f_\mathfrak{p}$ the reduction of $f$ at $\mathfrak{p}$.

For any prime $\mathfrak{p} $ of $K$ such that the reduction ${X}_{\mathfrak{p}}$ is nonsingular, we use the notations as below. 
%, the arguments below is valid. 
Let $p$ be the prime number divisible by $\mathfrak{p}$. 
%let $L$ be a complete valued field with respect to a valuation satisfying $|p| =1/p$. 
Let $\mathfrak{o}_v$ be the completion of the local ring $(O_{K})_\mathfrak{p}$ 
and suppose $\pi$ is a uniformization element of $\mathfrak{o}_v$. 
%Suppose $X$ is of dimension $N \ge 1$ and let
%$\mathfrak{o}_v[x] := \mathfrak{o}_v[x_1 , \dots, x_N ]$. 
For $f \in \mathfrak{o}_v[x_1,\dots,x_m]$ we let $\|f \|$ be the supremum of the absolute values of the coefficients of $f$. 
We let $\mathfrak{o}_v\langle x_1,\dots,x_m\rangle$ be the completion of $\mathfrak{o}_v[x_1,\dots,x_m]$ with respect to $\| \cdot \|$; 
this is
called the Tate algebra and it consists of all power series in $x_1,\dots,x_m$ with the property
that the absolute values of its coefficients tends to $0$.

\section{The Proof of Theorem \ref{key}}

We follow the idea of the proof in \cite{BGHKTT13}.
We need some lemmas.

\begin{lemma}\label{nonper}
With $f, X, \mathcal{X},S$ 
%and $\gamma_1,...,\gamma_r$ 
as in Section \ref{notation}, Fix a prime $\overline{\mathfrak{p}}$ of $\mathcal{O}_{\bar{K}} $ such that $\overline{\mathfrak{p}} \cap \mathcal{O}_{S,K} \notin S$. 
Assume that $\gamma_{\mathfrak{p}}$ is not a periodic point modulo $\mathfrak{p}$. Then for all finite extension $L/K$, there is an integer $M$ such that for all $m \ge M$ and for all $\beta \in X(\overline{K})$ with $f^m(\beta)=\gamma$, we have $[\mathcal{O}_{L(\beta)}/\tau: \mathcal{O}_{L}/\mathfrak{q}]>1$ where $\tau =\overline{\mathfrak{p}} \cap \mathcal{O}_{L(\beta)}$ and $\mathfrak{q}=\overline{\mathfrak{p}}\cap \mathcal{O}_L$.
\end{lemma}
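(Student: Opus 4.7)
The plan is to argue by contradiction. If no such $M$ existed for some fixed $L/K$, I could extract a sequence $m_n \to \infty$ together with points $\beta_n \in X(\overline{K})$ satisfying $f^{m_n}(\beta_n) = \gamma$ and $[\mathcal{O}_{L(\beta_n)}/\tau_n : \mathcal{O}_L/\mathfrak{q}] = 1$, where $\tau_n := \overline{\mathfrak{p}} \cap \mathcal{O}_{L(\beta_n)}$. The crucial consequence of the residue-degree-one condition is that the reduction $\bar{\beta}_n := (\beta_n)_{\tau_n}$ has coordinates in $\mathcal{O}_L/\mathfrak{q}$, so $\bar{\beta}_n$ lies in the finite set $\mathcal{X}(\mathcal{O}_L/\mathfrak{q})$.

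Next, I would invoke the setup from Section~\ref{notation}: because $\overline{\mathfrak{p}} \cap \mathcal{O}_{S,K} \notin S$, both the integral model $\mathcal{X}$ and the endomorphism $f$ reduce cleanly modulo $\mathfrak{q}$, and $\gamma$ is already a section with well-defined reduction $\gamma_{\mathfrak{p}}$. The pigeonhole principle then produces a single point $\bar{\beta}^\ast \in \mathcal{X}(\mathcal{O}_L/\mathfrak{q})$ that equals $\bar{\beta}_n$ for infinitely many $n$. Commutativity of reduction with $f$ on the integral model gives $f_{\mathfrak{q}}^{m_n}(\bar{\beta}^\ast) = \gamma_{\mathfrak{p}}$ for every such $n$, where $f_{\mathfrak{q}}$ is the base change of $f_{\mathfrak{p}}$ to the residue field at $\mathfrak{q}$.

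To close the argument, I would use that the forward $f_{\mathfrak{q}}$-orbit of $\bar{\beta}^\ast$ is eventually periodic, since it lies inside the finite set $\mathcal{X}(\mathcal{O}_L/\mathfrak{q})$. Because $\gamma_{\mathfrak{p}}$ occurs in this orbit at arbitrarily large times $m_n$, it must lie in the periodic tail, so $\gamma_{\mathfrak{p}}$ is periodic under $f_{\mathfrak{q}}$. Since $\gamma_{\mathfrak{p}}$ is already rational over the residue field at $\mathfrak{p}$ and $f_{\mathfrak{q}}$ is obtained from $f_{\mathfrak{p}}$ by residue-field extension, this forces $\gamma_{\mathfrak{p}}$ to be periodic under $f_{\mathfrak{p}}$ itself, contradicting the hypothesis of the lemma.

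The main obstacle is bookkeeping rather than conceptual. One has to make sure that the residue-degree-one hypothesis genuinely confines $\bar{\beta}_n$ to the finite set $\mathcal{X}(\mathcal{O}_L/\mathfrak{q})$ rather than to the a priori larger $\mathcal{X}(\overline{\mathbb{F}_p})$, and one has to verify that iteration of $f$ on $X(\overline{K})$ is compatible with iteration of $f_{\mathfrak{q}}$ on the special fiber. Both points follow from the choices made in Section~\ref{notation}, which ensure that $f$ extends to a scheme morphism $\mathcal{X} \to \mathcal{X}$ outside $S$ and that $\gamma$ extends to an $\mathcal{O}_{S,K}$-section. I note that quasi-finiteness of $f$ plays no essential role here; the argument uses only the existence of the integral model and the finiteness of the residue field.
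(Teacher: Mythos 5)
Your proposal is correct and is essentially the paper's own argument, just run as a proof by contradiction instead of directly: the paper observes that since $X_{\mathfrak{p}}(\mathcal{O}_L/\mathfrak{q})$ is finite and $\gamma_{\mathfrak{p}}$ is not periodic, there is an $M$ beyond which no point of that finite set maps to $\gamma_{\mathfrak{p}}$ in exactly $m$ steps, so any $\beta$ with $f^m(\beta)=\gamma$ must reduce outside $X_{\mathfrak{p}}(\mathcal{O}_L/\mathfrak{q})$, i.e.\ have residue degree greater than $1$. Your pigeonhole/eventual-periodicity formulation of the finiteness step and your remark that quasi-finiteness is not used are both consistent with the paper's (terser) proof.
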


\begin{proof}
Since $\gamma$ is not periodic modulo $\mathfrak{p}$ and the set $X_\mathfrak{p}(\mathcal{O}_{L}/\mathfrak{q})$ is finite, we know that there exists an $M $ such that for all $m\ge M$, the set 
$\{x\in X_{\mathfrak{p}} (\mathcal{O}_L/\mathfrak{q})~|~f_{\mathfrak{p}}^m(x)\neq \gamma_{\mathfrak{p}}\}$ % \mod \mathfrak{q}\}$ 
is empty. It follows that $[\mathcal{O}_{L(\beta)}/\tau: \mathcal{O}_{L}/\mathfrak{q}]>1$. 
\end{proof}

Assume that $\gamma$ is a fixed point modulo $\mathfrak{p}$. 
Suppose $\eta_j \in  f^{-1}(\gamma) \setminus \{\gamma\}$ are the preimages of $\gamma$ other than $\gamma$ itself. 
Let $E$ be the composiitum of $K$ with all the defining fields of $\eta_j$.

\begin{lemma}\label{fixed}
With $f, X, \mathcal{X}, S$ 
%and $\gamma_1,...,\gamma_r$ 
as in Section \ref{notation} and in the above paragraph. Fix a prime $\overline{\mathfrak{p}}$ of $\mathcal{O}_{\bar{K}} $ such that $\overline{\mathfrak{p}} \cap \mathcal{O}_{S,K} \notin S$. 
%Suppose the ramification index of $f$ at $\gamma$ is the same modulo $\mathfrak{p}$ as over $K$. 
Then for all finite extension $L/E$, there is an integer $M$ such that for all $m \ge  M$ and for all $\beta \in  X(\overline{K})$ with $f^m(\beta) = \gamma$ and $f^t(\beta)\neq \gamma$ for $t<m$, we have $[\mathcal{O}_{L(\beta)}/\tau : \mathcal{O}_L/\mathfrak{q}]  > 1$ where $\tau =\overline{\mathfrak{p}} \cap \mathcal{O}_{L(\beta)}$ and $\mathfrak{q}=\overline{\mathfrak{p}}\cap \mathcal{O}_L$.
%Assume that $\gamma$ is a fixed point modulo $\mathfrak{p}$ such that the ramification index of $f$ at $\gamma$ is the same modulo $\mathfrak{p}$ as over $K$. Then there exists a finite extension $E/K$ such that for all finite extension $L/E$, there is an integer $M$ such that for all $m \ge  M$ and for all $\beta \in  X(K)$ with $f^m(\beta) = \gamma$, we have $[\mathcal{O}_L(\beta)/\tau : \mathcal{O}_L/\mathfrak{q}]  > 1$ where $\tau =\overline{\mathfrak{p}} \cap \mathcal{O}_L(\beta)$ and $\mathfrak{q}=\overline{\mathfrak{p}}\cap \mathcal{O}_L$..
\end{lemma}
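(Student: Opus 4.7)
The plan is to argue by contradiction: assume that $[\mathcal{O}_{L(\beta)}/\tau : \mathcal{O}_L/\mathfrak{q}] = 1$, so that the reduction $\beta_\tau$ lies in the finite set $Y := X_\mathfrak{p}(\mathcal{O}_L/\mathfrak{q})$, and then derive an upper bound on $m$ by exploiting the fact that the forward $f_\mathfrak{p}$-orbit of $\beta_\tau$ remains inside $Y$. If this bound has the shape $m + 1 \le |Y|$, then choosing $M := |Y| + 1$ settles the lemma.

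First I would identify $f^{m-1}(\beta)$. Since $f(f^{m-1}(\beta)) = \gamma$ but $f^{m-1}(\beta) \neq \gamma$ by the minimality hypothesis, we have $f^{m-1}(\beta) = \eta_j$ for some $j$. Because $L \supseteq E$, each $\eta_j$ is defined over $L$, so its reduction lies in $Y$. After possibly enlarging $S$ to rule out the finitely many primes where some $\eta_j$ specializes to $\gamma$ (allowed since $\eta_j \neq \gamma$ in $X(\overline{K})$), we may assume the reduction of $\eta_j$ differs from $\gamma_\mathfrak{p}$; thus $f_\mathfrak{p}^{m-1}(\beta_\tau) \neq \gamma_\mathfrak{p}$. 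Because $\gamma_\mathfrak{p}$ is fixed by $f_\mathfrak{p}$, any hypothetical earlier hit $f_\mathfrak{p}^t(\beta_\tau) = \gamma_\mathfrak{p}$ with $t \le m - 1$ would propagate to give $f_\mathfrak{p}^{m-1}(\beta_\tau) = \gamma_\mathfrak{p}$, contradicting what we just established. Hence the entire orbit $\beta_\tau, f_\mathfrak{p}(\beta_\tau), \ldots, f_\mathfrak{p}^{m-1}(\beta_\tau)$ avoids $\gamma_\mathfrak{p}$.

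Next I would show that the $m+1$ points $\beta_\tau, f_\mathfrak{p}(\beta_\tau), \ldots, f_\mathfrak{p}^m(\beta_\tau) = \gamma_\mathfrak{p}$ are pairwise distinct. If two coincided, the orbit would be eventually periodic on a cycle that, by the previous step, does not contain the fixed point $\gamma_\mathfrak{p}$; such an orbit could never reach $\gamma_\mathfrak{p}$ at time $m$, a contradiction. This forces $m + 1 \le |Y|$, so taking $M := |Y| + 1$ completes the argument for every $m \ge M$.

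I do not foresee a serious obstacle; the argument parallels the proof of Lemma \ref{nonper}. The extra wrinkle, and the reason the hypothesis requires $L \supseteq E$ rather than merely $L \supseteq K$, is that here $\gamma_\mathfrak{p}$ \emph{is} a fixed point of $f_\mathfrak{p}$, so the na\"ive approach of Lemma \ref{nonper} no longer rules out the possibility that $\beta_\tau$ itself specializes to $\gamma_\mathfrak{p}$ and the orbit simply lingers there. To exclude this case we must know that the reductions of the other preimages $\eta_j$ are all distinct from $\gamma_\mathfrak{p}$, and for those reductions to be visible over $L$ we need $L$ to contain the field of definition of every $\eta_j$.
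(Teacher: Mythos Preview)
Your argument is correct and is essentially the same as the paper's. The only structural difference is packaging: the paper observes that each $(\eta_j)_\mathfrak{p}$ is non-periodic (since it maps to the fixed point $\gamma_\mathfrak{p}$ yet is distinct from it) and then invokes Lemma~\ref{nonper} as a black box for each $\eta_j$, taking $M=\max_j M_j+1$; you instead unroll that pigeon-hole argument directly inside the proof, obtaining the explicit bound $M=|X_\mathfrak{p}(\mathcal{O}_L/\mathfrak{q})|+1$. Both proofs rest on the same two facts---that $(\eta_j)_\mathfrak{p}\neq\gamma_\mathfrak{p}$ (already arranged in Section~\ref{notation}, so you need not enlarge $S$ again) and that $X_\mathfrak{p}(\mathcal{O}_L/\mathfrak{q})$ is finite---and on the identification $f^{m-1}(\beta)=\eta_j$.
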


\begin{proof} 
This is a very slight generalization of the proof of Proposition 1 of \cite{BGHKTT13}. 
We apply Lemma \ref{nonper} to each $\eta_j$. The assumption about $S$ implies that $\eta_j \not\equiv \gamma \pmod {\overline{\mathfrak{p}}}$. 
Therefore all $(\eta_j)_{\mathfrak{p}}$ are not periodic under $f_{\mathfrak{p}}$. 
We can find $M_j $'s such that for all $m\ge M_j$, if $\beta\in  X(K)$ satisfies $f^m(\beta) = \eta_j$ and $f^t(\beta)\neq \eta_j$ for $t<m$, then we have $[\mathcal{O}_{L(\beta)}/\tau : \mathcal{O}_L/\mathfrak{q}] > 1$.
Since there are finitely many $\eta_j$, 
we can set $M:=\max M_j +1$. 
Now for all $m\ge M$, if $\beta\in  X(K)$ satisfies $f^m(\beta) = \gamma$ and $f^t(\beta)\neq \gamma$ for $t<m$, then $f^{m-1}(\beta) = \eta_j$ for some $j$. 
Hence we have $[\mathcal{O}_{L(\beta)}/\tau : \mathcal{O}_L/\mathfrak{q}] > 1$.
%Let $E_i$ be the field generated for the fixed point $\gamma_i$ in Lemma \ref{nonper} and let $E$ be the composiitum $of the $E_i$. Fix a sufficiently large $M$ such that Lemma \ref{fixed} holds for all $\gamma_i$. 
\end{proof}

The lemma below shows that passing to integral closures affects only finitely many primes. 
\begin{lemma}\label{integral}
 Suppose $B' \subseteq B''$ are both rings with field of fraction $E$. 
 Assume that $B''$ is integral over $B'$. 
 Then there is prime $\mathfrak{q}$ of $B''$ lying above $\mathfrak{p}$, 
and furthermore for all but finitely many prime $\mathfrak{p} \in B'$, the prime $\mathfrak{q}$ is unique and we have $\kappa(\mathfrak{q}) = \kappa(\mathfrak{p})$. 
\end{lemma}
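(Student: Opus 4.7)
The plan is to combine the classical lying-over theorem with a conductor-type argument that isolates a single denominator controlling all exceptional primes. For the existence of a prime $\mathfrak{q}$ of $B''$ lying over $\mathfrak{p}$, I would invoke the Cohen--Seidenberg lying-over theorem, which holds for any integral extension.

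For uniqueness and the residue-field equality, the key observation is that $B'$ and $B''$ share the same fraction field $E$. In the setting of this paper, $B'$ is a localization of a ring of $S$-integers in a number field, hence a Noetherian one-dimensional domain, and $B''$ (sitting between $B'$ and its integral closure in $E$) is finitely generated as a $B'$-module. Clearing denominators for a finite list of generators then produces a single nonzero element $s\in B'$ with $sB''\subseteq B'$, equivalently $B'[1/s]=B''[1/s]$. This element $s$ will play the role of a conductor.

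Now for any prime $\mathfrak{p}$ of $B'$ not containing $s$, I would localize at the multiplicative set $B'\setminus \mathfrak{p}$. Since $sB''\subseteq B'$ and $s$ is a unit in $(B')_{\mathfrak{p}}$, every $b\in B''$ satisfies $b=(sb)/s\in (B')_{\mathfrak{p}}$, so $(B'')_{\mathfrak{p}}=(B')_{\mathfrak{p}}$. The primes of $B''$ lying over $\mathfrak{p}$ correspond to maximal ideals of $(B'')_{\mathfrak{p}}$, of which there is exactly one, and the induced map on residue fields is an isomorphism, yielding $\kappa(\mathfrak{q})=\kappa(\mathfrak{p})$. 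Because $B'$ has Krull dimension one, only finitely many primes contain the nonzero element $s$, and these are precisely the primes that might be exceptional. The main (and relatively mild) obstacle is justifying the existence of the single conductor $s$, which relies on the finite generation of $B''$ as a $B'$-module---a standard fact in the number-field setting, but one that would genuinely fail in more general contexts and therefore deserves to be flagged in the write-up.
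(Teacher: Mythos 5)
Your proof is correct (granting the contextual finiteness hypotheses, which you rightly flag), but it reaches the conclusion by a different mechanism than the paper. Both arguments reduce to showing $(B'')_{\mathfrak{p}} = (B')_{\mathfrak{p}}$ for all but finitely many $\mathfrak{p}$; the paper does this by observing that $(B')_{\mathfrak{p}}$ is a discrete valuation ring---hence integrally closed in $E$---for all but finitely many $\mathfrak{p}$, so that $(B'')_{\mathfrak{p}}$, being integral over $(B')_{\mathfrak{p}}$ and contained in $E$, must coincide with it. You instead manufacture a single conductor element $s$ with $sB'' \subseteq B'$ and invert it. The two finite exceptional sets are a priori different (the non-normal locus of $B'$ versus $V(s)$), and both are finite for the same underlying reason: in this paper $B'$ is a one-dimensional domain finitely generated over $\mathcal{O}_{S,K}$, so its normalization is module-finite over it. Your version has the merit of making explicit the hypothesis the lemma silently uses---as literally stated, for arbitrary rings and an arbitrary integral extension, the conclusion can fail, and your write-up correctly isolates module-finiteness of $B''$ over $B'$ as the needed input. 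The paper's version buys a slightly lighter footprint: it needs only that $(B')_{\mathfrak{p}}$ is normal at cofinitely many primes (true for any excellent one-dimensional domain), without naming a conductor. Either argument is adequate for the application in the proof of Theorem \ref{key}.
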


\begin{proof}
The first conclusion holds by the going-up theorem. 
$B'_{\mathfrak{p}}$ is a discrete valuation ring for all but finitely many $\mathfrak{p}$. 
For these $\mathfrak{p}$ the integral closure of $B'_{\mathfrak{p}}$ in its field of fraction is still 
$B'_{\mathfrak{p}}$. Hence 
such that $\kappa(\mathfrak{q}) = \kappa(\mathfrak{p})$. 
\end{proof}

The advantage of passing to the integral closures is that we have the same automorphism group at the level of fields and at the level of rings. 
Clearly $R$ is normal over $R_i$ and over $\mathcal{O}_{S,K}$. 
Let $G = \Gal(F/K)$ and let $H_i = \Gal(F/\mathrm{Frac}(R_i))$. 
Then $R^G =  \mathcal{O}_{S,K}$ and  $R^{H_i} = R_i$ as $R$ and $R_i$ are integrally closed. 
Let $\mathcal{T}_i$ be the set of left cosets of $H_i$ in $G$. 
For each prime $\mathfrak{q'}$ of $F$, suppose $\mathfrak{p}' = \mathfrak{q'} \cap K$. 
Let $D_{\mathfrak{q}'} = D(\mathfrak{q}'/\mathfrak{p}')$ and $I_{\mathfrak{q}'} = I(\mathfrak{q}'/\mathfrak{p}')$ 
denote the decomposition and inertia groups of $\mathfrak{q}'$.  
We may view $G$ as a group of permutations of the set $\mathcal{T}_i$. 
The following lemma is part of Lemmas 3.1 and 3.2 of \cite{GTZ07}.
This lemma gives a unified approach of treating ramified and unramified primes.

\begin{lemma}\label{action}
There is a group isomorphism $D_{\mathfrak{q}'}/I_{\mathfrak{q}'} \cong \mathrm{Gal}(\kappa(\mathfrak{q}') / \kappa(\mathfrak{p}'))$, and the number of primes $\mathfrak{q}'_i\subset  R_i$ lying over $\mathfrak{p}'$ such that $\kappa(\mathfrak{q}'_i) = \kappa(\mathfrak{p}')$ is the number of common orbits of $D_{\mathfrak{q}'}$ and $I_{\mathfrak{q}'}$ on $\mathcal{T}_i$. \qed
\end{lemma}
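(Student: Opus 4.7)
The plan is to prove the two assertions separately, using the classical decomposition–inertia dictionary for a Galois extension of Dedekind domains applied to $R/\mathcal{O}_{S,K}$ (with group $G$) and $R/R_i$ (with group $H_i$). For the first isomorphism: every $\sigma\in D_{\mathfrak{q}'}$ stabilizes $\mathfrak{q}'$ and hence descends to a $\kappa(\mathfrak{p}')$-automorphism $\bar\sigma$ of $\kappa(\mathfrak{q}')$, yielding a homomorphism $D_{\mathfrak{q}'}\to\mathrm{Aut}(\kappa(\mathfrak{q}')/\kappa(\mathfrak{p}'))$ whose kernel is $I_{\mathfrak{q}'}$ by definition. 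The target is in fact $\Gal(\kappa(\mathfrak{q}')/\kappa(\mathfrak{p}'))$: the extension is normal because the residues of $G$-conjugates of a lifted generator are all carried to one another by elements of $D_{\mathfrak{q}'}$. Surjectivity is the standard Chinese Remainder argument: given $\tau$ and a primitive element $\bar\alpha$ of $\kappa(\mathfrak{q}')$, pick a lift $\alpha\in R$ with $\alpha\equiv\bar\alpha\pmod{\mathfrak{q}'}$ that vanishes modulo $\sigma\mathfrak{q}'$ for every $\sigma\in G\setminus D_{\mathfrak{q}'}$; any $G$-element carrying $\alpha$ to a lift of $\tau(\bar\alpha)$ must lie in $D_{\mathfrak{q}'}$ and induces $\tau$.

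For the counting statement I would first set up the dictionary. Primes of $R$ above $\mathfrak{p}'$ form the transitive $G$-set $G/D_{\mathfrak{q}'}$, and, because $R^{H_i}=R_i$ identifies primes of $R_i$ above $\mathfrak{p}'$ with $H_i$-orbits on this set, they correspond to double cosets $H_i\backslash G/D_{\mathfrak{q}'}$, equivalently (via $g\mapsto g^{-1}$) to $D_{\mathfrak{q}'}$-orbits on $\mathcal{T}_i=G/H_i$. Next, for a prime $\mathfrak{q}'_i$ corresponding to the $D_{\mathfrak{q}'}$-orbit of $g^{-1}H_i$, choose the prime $\mathfrak{q}^*=g\mathfrak{q}'$ of $R$ above $\mathfrak{q}'_i$; then $D(\mathfrak{q}^*/\mathfrak{p}')=gD_{\mathfrak{q}'}g^{-1}$, $I(\mathfrak{q}^*/\mathfrak{p}')=gI_{\mathfrak{q}'}g^{-1}$, and the corresponding groups for $\mathfrak{q}^*/\mathfrak{q}'_i$ are their intersections with $H_i$. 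Applying the first part of the lemma to both $\mathfrak{q}^*/\mathfrak{p}'$ and $\mathfrak{q}^*/\mathfrak{q}'_i$ together with the multiplicativity $f(\mathfrak{q}^*/\mathfrak{p}')=f(\mathfrak{q}^*/\mathfrak{q}'_i)\,f(\mathfrak{q}'_i/\mathfrak{p}')$ then gives, after a direct index manipulation,
\[
f(\mathfrak{q}'_i/\mathfrak{p}')\;=\;\frac{|D_{\mathfrak{q}'}\cdot g^{-1}H_i|}{|I_{\mathfrak{q}'}\cdot g^{-1}H_i|},
\]
namely the number of $I_{\mathfrak{q}'}$-orbits contained in the $D_{\mathfrak{q}'}$-orbit of $g^{-1}H_i$ (these all have equal size because $I_{\mathfrak{q}'}\trianglelefteq D_{\mathfrak{q}'}$).

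The conclusion is immediate: $\kappa(\mathfrak{q}'_i)=\kappa(\mathfrak{p}')$ iff $f(\mathfrak{q}'_i/\mathfrak{p}')=1$ iff the $D_{\mathfrak{q}'}$-orbit of $g^{-1}H_i$ consists of a single $I_{\mathfrak{q}'}$-orbit, equivalently the two orbits coincide, which is exactly what a \emph{common orbit} of $D_{\mathfrak{q}'}$ and $I_{\mathfrak{q}'}$ on $\mathcal{T}_i$ means. Summing over $D_{\mathfrak{q}'}$-orbits yields the stated count. The principal technical hurdle is the displayed orbit-ratio identity: one must track carefully how conjugation by $g$ interacts with intersection against $H_i$ when combining the two applications of the first assertion. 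Once this identity is established, the remainder is a purely combinatorial translation between Galois-theoretic and set-theoretic data.
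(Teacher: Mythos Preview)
Your argument is correct. The first isomorphism is the standard decomposition--inertia fact, and your Chinese Remainder surjectivity argument is the usual one. For the second assertion, your dictionary between primes of $R_i$ over $\mathfrak{p}'$ and $D_{\mathfrak{q}'}$-orbits on $\mathcal{T}_i=G/H_i$ is set up correctly, and the orbit-ratio identity
\[
f(\mathfrak{q}'_i/\mathfrak{p}')=\frac{|D_{\mathfrak{q}'}\cdot g^{-1}H_i|}{|I_{\mathfrak{q}'}\cdot g^{-1}H_i|}
\]
is right: both sides equal $\dfrac{[D_{\mathfrak{q}'}:I_{\mathfrak{q}'}]}{[\,H_i\cap gD_{\mathfrak{q}'}g^{-1}:H_i\cap gI_{\mathfrak{q}'}g^{-1}\,]}$ once one computes the stabilizers and uses $|g^{-1}H_ig\cap D_{\mathfrak{q}'}|=|H_i\cap gD_{\mathfrak{q}'}g^{-1}|$. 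The translation ``$f=1$ $\Leftrightarrow$ the $D_{\mathfrak{q}'}$-orbit is a single $I_{\mathfrak{q}'}$-orbit'' is then immediate since $I_{\mathfrak{q}'}\trianglelefteq D_{\mathfrak{q}'}$.

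As for comparison with the paper: there is nothing to compare. The paper does not prove this lemma; it simply records it as ``part of Lemmas~3.1 and~3.2 of \cite{GTZ07}'' and places a \qed. You have supplied exactly the classical argument that underlies that citation, so your write-up is a genuine addition rather than an alternative route.
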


\begin{lemma}\label{cor}
If there is no $\mathfrak{q}'_i$ lying over $\mathfrak{p}'$ with residue degree 1, 
then for all $\bar{x} \in X_\mathfrak{p}(\mathcal{O}/\mathfrak{p})$, 
the orbit under $f_{\mathfrak{p}}$ does not hit $\gamma_\mathfrak{p}$ for the first time at the $M$-th iterate. \qed
\end{lemma}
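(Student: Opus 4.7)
The plan is to argue by contradiction: suppose some $\bar x\in X_{\mathfrak p}(\mathcal{O}/\mathfrak p)$ has $f_{\mathfrak p}^M(\bar x)=\gamma_{\mathfrak p}$ and $f_{\mathfrak p}^t(\bar x)\ne \gamma_{\mathfrak p}$ for every $t<M$, and then identify $\bar x$ as the reduction of one of the preimages $\beta_i$ whose integral closure $R_i$ is the object appearing in Lemma~\ref{action}. This identification will immediately produce a prime $\mathfrak q'_i$ of $R_i$ above $\mathfrak p'$ with residue degree~$1$, in contradiction to the hypothesis.

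First I would consider the finite characteristic-zero preimage set $Z:=f^{-M}(\gamma)\setminus\bigcup_{t<M} f^{-t}(\gamma)=\{\beta_1,\dots,\beta_N\}\subset X(\overline K)$, which is finite because $f$ is quasi-finite; its elements are precisely the $\beta_i$ with defining field $\mathrm{Frac}(R_i)$. After expanding $S$ by finitely many primes of bad reduction, the natural integral model of $Z$ inside $\mathcal X$ is the disjoint union $\coprod_i \mathrm{Spec}(R_i)$ over $\mathcal{O}_{S,K}$, and by functoriality of base change its special fiber at $\mathfrak p$ is the scheme $f_{\mathfrak p}^{-M}(\gamma_{\mathfrak p})$ with the lower-iterate preimages removed. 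Since by the first-hit assumption $\bar x$ is a $\kappa(\mathfrak p)$-point of $f_{\mathfrak p}^{-M}(\gamma_{\mathfrak p})$ not lying in any $f_{\mathfrak p}^{-t}(\gamma_{\mathfrak p})$ with $t<M$, it must be a $\kappa(\mathfrak p)$-point of exactly one component $\mathrm{Spec}(R_i\otimes_{\mathcal{O}_{S,K}} \kappa(\mathfrak p))$. But a $\kappa(\mathfrak p)$-point of this component is the same data as a prime $\mathfrak q'_i$ of $R_i$ above $\mathfrak p'$ with residue field $\kappa(\mathfrak q'_i)=\kappa(\mathfrak p')$, i.e.\ of residue degree $1$, contradicting the hypothesis.

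The main obstacle I anticipate is the scheme-theoretic bookkeeping in the second step: the integral closures $R_i$ must assemble into the integral model of $Z$ compatibly with reduction mod $\mathfrak p$, which can fail at finitely many primes where $f$ or one of the extensions $K(\beta_i)/K$ ramifies badly. Since these primes are finite in number, they can be absorbed into $S$ alongside the primes already collected in Section~\ref{notation}, after which the identification used above is legitimate and the argument goes through unchanged.
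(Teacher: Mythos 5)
Your argument is correct, and it is the argument the paper leaves implicit: Lemma \ref{cor} is stated with no proof at all (only a \qed), and the intended content is exactly your identification of a $\kappa(\mathfrak{p}')$-rational first-time preimage of $\gamma_{\mathfrak{p}'}$ at stage $M$ with a residue-degree-one prime $\mathfrak{q}'_i$ of some $R_i$, which is also the bridge that Lemma \ref{action} (counting degree-one primes via orbits of $D_{\mathfrak{q}'}$ and $I_{\mathfrak{q}'}$) is set up to exploit. You are right to flag the only genuinely delicate point: since $f$ is merely quasi-finite, the special fiber of $(f^M)^{-1}(\gamma)$ can a priori contain vertical components or fail to coincide with $f_{\mathfrak{p}'}^{-M}(\gamma_{\mathfrak{p}'})$, but this happens at only finitely many primes, and those can be absorbed into $S$ together with the finite exceptional set of Lemma \ref{integral}; this is harmless because in the proof of Theorem \ref{key} the lemma is applied to the primes of a positive-density Chebotarev set rather than to a single prime fixed in advance. (One cosmetic point: the statement mixes $\mathfrak{p}$ and $\mathfrak{p}'$; your reading, taking them to be the same prime, is the intended one.)
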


The above corollary applies to any $\mathfrak{p}'$, but we will apply it to $\mathfrak{p}$. 

\begin{corollary}\label{unified}
Notations as in the paragraph before Lemma \ref{action}. Suppose there is no $1\le i \le s$ such that 
$\kappa(\mathfrak{q}'_i) = \kappa(\mathfrak{p}')$. 
Then there is an element $\sigma \in G$ such that $\sigma $ 
has no fixed point on any of $\mathcal{T}_i$. 
\end{corollary}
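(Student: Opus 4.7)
The plan is to produce the required $\sigma$ as a Frobenius lift in the decomposition group. Concretely, I would pick any $\sigma \in D_{\mathfrak{q}'}$ whose image in the cyclic group $D_{\mathfrak{q}'}/I_{\mathfrak{q}'} \cong \Gal(\kappa(\mathfrak{q}')/\kappa(\mathfrak{p}'))$ is a generator. This is the natural candidate because a generator of a cyclic group lies in no proper subgroup, and the argument below will reduce the desired fixed-point-freeness to a statement about certain subgroups of $D_{\mathfrak{q}'}/I_{\mathfrak{q}'}$ being proper.

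The bridge to Lemma \ref{action} is a short double-coset computation. Suppose for contradiction that $\sigma$ fixes some $g H_i \in \mathcal{T}_i$; then $\sigma \in g H_i g^{-1}$, and setting $A := D_{\mathfrak{q}'} \cap g H_i g^{-1}$ (the stabilizer in $D_{\mathfrak{q}'}$ of $g H_i$) one has $\sigma \in A$. The $D_{\mathfrak{q}'}$-orbit $\mathcal{O}$ through $g H_i$ is, as a $D_{\mathfrak{q}'}$-set, isomorphic to $D_{\mathfrak{q}'}/A$. Since $I_{\mathfrak{q}'}$ is normal in $D_{\mathfrak{q}'}$, the number of $I_{\mathfrak{q}'}$-orbits on $D_{\mathfrak{q}'}/A$ equals
$$|I_{\mathfrak{q}'} \backslash D_{\mathfrak{q}'} / A| \;=\; [D_{\mathfrak{q}'}/I_{\mathfrak{q}'} : A I_{\mathfrak{q}'}/I_{\mathfrak{q}'}] \;=\; [D_{\mathfrak{q}'} : A I_{\mathfrak{q}'}].$$

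Now I would invoke Lemma \ref{action}: the hypothesis that no prime $\mathfrak{q}'_i$ over $\mathfrak{p}'$ has $\kappa(\mathfrak{q}'_i) = \kappa(\mathfrak{p}')$ says that on every $\mathcal{T}_i$ no $D_{\mathfrak{q}'}$-orbit is simultaneously an $I_{\mathfrak{q}'}$-orbit, so in particular the orbit $\mathcal{O}$ above is properly refined by $I_{\mathfrak{q}'}$. Hence $[D_{\mathfrak{q}'} : A I_{\mathfrak{q}'}] \ge 2$, i.e., $A I_{\mathfrak{q}'}/I_{\mathfrak{q}'}$ is a proper subgroup of the cyclic group $D_{\mathfrak{q}'}/I_{\mathfrak{q}'}$. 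But $\sigma \in A \subseteq A I_{\mathfrak{q}'}$ forces $\sigma I_{\mathfrak{q}'}$ into this proper subgroup, contradicting the fact that $\sigma I_{\mathfrak{q}'}$ was chosen to generate $D_{\mathfrak{q}'}/I_{\mathfrak{q}'}$. So no such fixed point can exist, and $\sigma$ witnesses the corollary.

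I do not expect a real obstacle here. The only technical point is using normality of $I_{\mathfrak{q}'}$ in $D_{\mathfrak{q}'}$ to rewrite the double-coset count as an index in the cyclic quotient; this is exactly what lets a single Frobenius-lift argument cover the unramified case ($I_{\mathfrak{q}'} = 1$, where $\sigma$ is the usual Frobenius) and the ramified case uniformly, which is the unifying feature the corollary is meant to provide for the proof of Theorem \ref{key}.
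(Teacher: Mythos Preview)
Your proposal is correct and follows essentially the same approach as the paper: choose $\sigma\in D_{\mathfrak{q}'}$ lifting a generator of the cyclic quotient $D_{\mathfrak{q}'}/I_{\mathfrak{q}'}$, assume it fixes some $t\in\mathcal{T}_i$, and deduce that the $D_{\mathfrak{q}'}$-orbit of $t$ coincides with its $I_{\mathfrak{q}'}$-orbit, contradicting Lemma~\ref{action}. The only cosmetic difference is that the paper carries this out by the one-line computation $D_{\mathfrak{q}'}(t)=\bigcup_j I_{\mathfrak{q}'}\sigma^j(t)=I_{\mathfrak{q}'}(t)$, whereas you recast the same fact via the stabilizer $A$ and the index $[D_{\mathfrak{q}'}:AI_{\mathfrak{q}'}]$.
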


\begin{proof}
%$\sigma ~|~\mathrm{Frac}{(R_i)} \in \Gal(\mathrm{Frac}(R_i)/K)$ 
%is a generator of $D_i/I_i$ for all $i \in %\{1,2...,s\}$. 
%We claim that 
%there cannot be an $i$ such that $\sigma$ fixes an element in $\mathcal{T}_i$. 
%$D$ and $I$ do not have a common orbit on any $\mathcal{T}_i$.  
By Lemma \ref{action},  $D_{\mathfrak{q}'}/I_{\mathfrak{q}'} \cong \mathrm{Gal}(\kappa(\mathfrak{q}') / \kappa(\mathfrak{p}'))$ is cyclic. Choose %$\sigma \in \mathrm{Fr}_\mathfrak{q}$. 
$\sigma\in G$ such that 
$\bar{\sigma} \in D_{\mathfrak{q}'}/I_{\mathfrak{q}'}$ is a generator of $D_{\mathfrak{q}'}/I_{\mathfrak{q}'}$.  
Assume by contradiction %that $D(t) = I(t)$ for some $t\in \mathcal{T}_i$.  
that some $t \in \mathcal{T}_i$ is fixed by $\sigma$ and write $D_{\mathfrak{q}'} = \cup _{j=1}^m I_{\mathfrak{q}'}\sigma^j$ 
where $m = |D_{\mathfrak{q}'}/I_{\mathfrak{q}'}|$.
Now $D_{\mathfrak{q}'}(t) = \cup ^m_{j=1}I_{\mathfrak{q}'}\sigma^j (t) = I_{\mathfrak{q}'} (t)$. 
Hence by Lemma \ref{action} 
this is a contradiction. 
\end{proof}

\begin{example}
The converse of Corollary \ref{unified} is not true. 
Let $K = \mathbb{Q}, s=1,\mathfrak{p}' = (2)$ and let $\mathrm{Frac}(R_1)  = F = \mathbb{Q}(\sqrt{-1})$. Then the generator $\sigma$ of $G$ lies in $I_{\mathfrak{q}'}$, has no fixed point on $\mathcal{T}_1$, and yet 
$\kappa(\mathfrak{q}'_1) = \kappa(\mathfrak{p}')$. 
\end{example}

We replace $K$ by $E$ as in the paragraph before Lemma \ref{fixed} from now on. 
Our goal is to apply the Chebotarev density theorem to obtain a family of primes $\mathfrak{p}'$ satisfying Lemma \ref{fixed}. However, the prime $\mathfrak{p}$ we found might be ramified. So potentially we might need a different way of finding a conjugacy class of a certain Galois group to apply 
the Chebotarev density theorem. 
By Corollary \ref{unified}, there is a unified way to treat the cases when the inertia group is identity or not. 

\begin{proof}[Proof of Theorem \ref{key}]

Use $S$ as in Section \ref{notation}.
Fix a prime $\overline{\mathfrak{p}}$ of $\mathcal{O}_{\bar{K}} $ as in Lemma \ref{fixed}. 
Let $F/K$ be obtained by composing the defining field of all $\beta \in X(K)$ such that the orbit of $\beta$ hits the set $\{\gamma_1,...,\gamma_r\}$ for the first time at the $M$-th iterate. 
%We may assume that $F= K$. 
Denote by $B$ the set of all such $\beta$. Suppose $B = \{\beta_1,...,\beta_s\}$. 
Suppose $A_i$ and $A$ are the smallest $\mathcal{O}_{S,K}$-algebras over which $\beta_i$ and $B$ are defined. 
By our assumption in Section \ref{notation}, $A_i$ and $A$ are all finitely generated modules over $\mathcal{O}_{S,K}$.  
Then we have inclusions of rings $ \mathcal{O}_{S,K} \subseteq  A_i\subseteq A$. 
Let $R$ be the integral closure of $A$ in its field of fraction. 
%Let $A_i′$ be the normal closure of $A_i$. 
Let $R_i$ be the integral closure of $A_i$ in its field of fraction. 
Then the primes of $R_i$ can be identified with primes of $\mathrm{Frac}(R_i)$ whose intersection with $K$ is not in $S$.

%\begin{remark}
%In the case when $I = 1$, we immediately have $\sigma(\beta_i) \neq \beta_i$ for all $\sigma\in D$ and all $i$. 
%This implies that $(\beta_i)_{\mathfrak{q}} \notin \kappa_{\mathfrak{p}}$. 
%See also \cite{BGHKTT13}. 
%\end{remark}

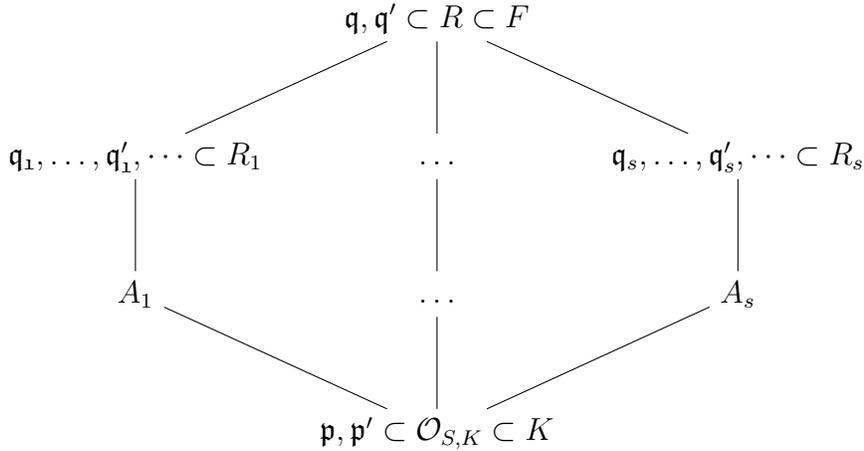
\begin{figure}[h!]\centering
\begin{tikzpicture}[description/.style={fill=white,inner sep=2pt}]
\matrix (m) [matrix of math nodes, row sep=1.5em,
column sep=0.3em, text height=1.5ex, text depth=0.25ex]
{&&&& \mathfrak{q},\mathfrak{q'}  \subset R\subset F &&&& \\
&&&&&&&&\\
\mathfrak{q_1},\dots, \mathfrak{q'_1,}\dots \subset R_1 &&&& \dots &&&& \mathfrak{q}_s, \dots, \mathfrak{q}'_s,\dots \subset R_s\\
 &&&&&&&& \\ 
A_1 &&&& \dots &&&&  A_s\\
&&&&&&&&&&&& \\
&&&&\mathfrak{p}, \mathfrak{p'} \subset \mathcal{O}_{S,K} \subset K &&&&\\};

\path[-] (m-1-5) edge (m-3-1)
		         edge (m-3-9)
                 edge (m-3-5)
		 (m-3-1) edge (m-5-1)
		 (m-3-5) edge (m-5-5)
		 (m-3-9) edge (m-5-9)
		 (m-7-5) edge (m-5-1)
		         edge (m-5-5)
                 edge (m-5-9);
\end{tikzpicture}
\caption{The Diagram}
\end{figure}

By Lemma \ref{fixed}, there is no primes $\mathfrak{q}_i  \in R_i$ lying over $\mathfrak{p}$ such that $\kappa(\mathfrak{q}_i) = \kappa(\mathfrak{p})$. 
Then we find a $\sigma \in G$ as in the proof of Corollary \ref{unified}. 
Look at the nonzero prime ideals of $K$. By the Chebotarev density theorem, there exists a set of primes $\mathcal{P}\subseteq{M_K}$ of positive density, which consists of unramified primes $\mathfrak{p}'$ not in the finite set given by Lemma \ref{integral} such that the Frobenius conjugacy class $\mathrm{Fr}_{\mathfrak{p}'} \in \Gal(F/K)$ coincides with the conjugacy class of $\sigma$. In this case $I_{\mathfrak{p}'} = 1$. By the above paragraph and Lemma \ref{action}, for all such $\mathfrak{p}'$ there is no prime $\mathfrak{q}'_i \subseteq  R_i$ lying over $\mathfrak{p'}$ such that $\kappa(\mathfrak{q'_i}) = \kappa(\mathfrak{p'})$.
It follows that none of $\mathfrak{q'_i}$ has residue degree $1$ over $\kappa(\mathfrak{p'})$. 
By Lemma \ref{cor} there is no ${x}_{\mathfrak{p}} \in X_\mathfrak{p}(\mathcal{O}/\mathfrak{p})$, 
whose orbit under $f_{\mathfrak{p}}$ hits $\gamma_\mathfrak{p}$ for the first time at the $M$-th iterate. 
On the other hand, if ${x}_{\mathfrak{p}}$ hits $(\gamma_i)_\mathfrak{p}$ for the first time at the $m$-th iteration with some $m>M$, then the forward image $f^{m-M}_{\mathfrak{p}}({x}_{\mathfrak{p}})$ hits $(\gamma_i)_\mathfrak{p}$ for the first time at the $M$-th iteration. This is a contradiction as $\mathfrak{p}\in \mathcal{P}$. 
%Let $m \ge 0$, and let $z \in \mathbb{P}^1(K)$ be a point such that $f^m(z)$ is
%congruent modulo $\mathfrak{t}$ to some $\gamma_i$. Then there is some $0\le t< M$ such that
%$f^t(z)$ is congruent modulo $\mathfrak{t}$ to some $\gamma_i$

Therefore, for all $m \ge M$ and for all $\mathfrak{p'} \in \mathcal{P}$ there is no $\eta_\mathfrak{p'} \in X_{\mathfrak{p'}}(\kappa(\mathfrak{p'}))$ such that $f_\mathfrak{p'}^m(\eta_\mathfrak{p'}) \in \{(\gamma_1)_\mathfrak{p'},...,(\gamma_r)_\mathfrak{p'}\}$.

\end{proof}

%\begin{remark}
 %Here $\mathfrak{p}$ might not be a requested $\mathfrak{p}$ in Theorem \ref{main}, but the $\mathfrak{p'}$ is.  
%\end{remark}

%\section{Proof of Theorem 1.1}
\section{Applying the Approximate $p$-adic Interpolation}

We need the following theorem.

\begin{theorem}[\cite{BGT16}, Theorem 11.11.3.1]\label{Bell}
%Suppose $X$ is $N$-dimensional. 
Let $E$ be a $N$-by-$N$ matrix with entries in $\mathfrak{o}_v$.
Suppose $E^2 = E$. 
If $f \in \mathfrak{o}_v\langle x_1,\dots,x_N\rangle^N$ 
satisfies
$f (x) \equiv Ex \pmod {p^c}$
for some $c > 1/(p - 1)$, then there exists 
$g \in \mathfrak{o}_v\langle x_1,\dots,x_N, z\rangle^N$ 
such that
$\|g(x, n) - f^n (x)\| \le p^{-nc}$
for each $n \in\mathbb{Z}_{\ge 0}$.
\end{theorem}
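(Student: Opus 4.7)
The plan is to construct the interpolating function $g$ via an asymptotic expansion of $f^n(x)$ in powers of $p^c$, and then assemble this expansion into a single element of the Tate algebra $\mathfrak{o}_v\langle x_1,\dots,x_N, z\rangle^N$, exploiting the idempotent structure $E^2 = E$ together with the hypothesis $c > 1/(p-1)$.

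First I would normalize coordinates. Since $E$ is an idempotent matrix over the discrete valuation ring $\mathfrak{o}_v$, an $\mathfrak{o}_v$-linear change of basis puts $E$ into the block form $\mathrm{diag}(I_s, 0_{N-s})$; the hypothesis then reads $f(x) = Ex + p^c h(x)$ with $h$ in the Tate algebra in $x$. Next I would set up the iterative expansion: writing $f^n(x) = Ex + p^c A_1(x,n) + p^{2c} A_2(x,n) + \cdots$ and using the composition $f^n = f \circ f^{n-1}$ together with the formal Taylor expansion of $h$ about $Ex$, one obtains recursions of the form $A_j(x, n) = E\, A_j(x, n-1) + \Phi_j(A_{<j}(x, n-1), h, h', \dots)$. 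Because $E^k = E$ for all $k \ge 1$, solving each such recursion by iteration produces $A_j(x, n)$ as a polynomial in $n$ of degree at most $j$ whose coefficients lie in the Tate algebra in $x$.

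Having obtained the formal expansion, I would set
\[ g(x, z) := Ex + \sum_{j \ge 1} p^{jc} A_j(x, z) \]
and verify convergence in $\mathfrak{o}_v\langle x_1, \dots, x_N, z\rangle^N$. Rewriting each polynomial $A_j(x, z)$ in the Mahler basis $\binom{z}{r}$, whose Gauss norm is bounded by $p^{r/(p-1)}$, and combining with the prefactor $p^{jc}$, each term of the series has Tate norm at most $p^{-j(c - 1/(p-1))}$, which decays geometrically by the hypothesis. The approximation bound $\|g(x, n) - f^n(x)\| \le p^{-nc}$ then follows by construction: truncating the series at order $j = n$ exhibits the difference as lying in $p^{nc}$ times a bounded element of the Tate algebra.

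The main obstacle is the bookkeeping of the iterative expansion. After Taylor-expanding $h$ and collecting powers of $p^c$, one must verify that the resulting recursions for $A_j$ preserve integrality of their Mahler coefficients — the factorial denominators produced by Taylor's theorem have $p$-adic valuation at most $k/(p-1)$, and these must be dominated by the $p^{kc}$ factors gained from the smallness of the perturbation, which is precisely what $c > 1/(p-1)$ ensures. Equally delicate is the uniform control on the $z$-degree of each $A_j$, since any super-linear growth would ruin the Tate-norm estimate. These are combinatorial rather than conceptual issues, but they are exactly where the hypothesis $c > 1/(p-1)$ enters in an essential way.
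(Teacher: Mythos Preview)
The paper does not contain a proof of this theorem. Theorem~\ref{Bell} is quoted verbatim as Theorem~11.11.3.1 of \cite{BGT16} and used as a black box; the paper's own contribution is the downstream Proposition~\ref{book}, which \emph{applies} Theorem~\ref{Bell} but does not re-prove it. So there is no ``paper's own proof'' against which to compare your attempt.

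That said, your sketch is broadly along the right lines for how results of this type are established in \cite{BGT16}: one writes $f(x)=Ex+p^{c}h(x)$, expands $f^{n}(x)$ order by order in $p^{c}$, and exploits $E^{k}=E$ to force the $j$-th coefficient $A_{j}(x,n)$ to be polynomial in $n$ of bounded degree; the condition $c>1/(p-1)$ then absorbs the factorial denominators coming either from Taylor expansion or from converting the monomial basis in $z$ to the Mahler basis $\binom{z}{r}$. One point to be careful about: your claim that an idempotent $E$ over $\mathfrak{o}_{v}$ can be brought to block-diagonal form by an $\mathfrak{o}_{v}$-linear change of basis is correct (the decomposition $\mathfrak{o}_{v}^{N}=E\mathfrak{o}_{v}^{N}\oplus(I-E)\mathfrak{o}_{v}^{N}$ is into projective, hence free, summands since $\mathfrak{o}_{v}$ is local), but it deserves a sentence of justification. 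A second point: when you say the Mahler function $\binom{z}{r}$ has ``Gauss norm bounded by $p^{r/(p-1)}$'', you mean the Gauss norm of its coefficients as a polynomial in $z$ (coming from $|1/r!|_{p}$), not its sup norm on $\mathbb{Z}_{p}$, which is $1$; it is the former that matters for membership in $\mathfrak{o}_{v}\langle x,z\rangle$, so make that distinction explicit when you write up the convergence estimate.
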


%Excluding finitely many primes we may assume that $\mathcal{X}$ has good reduction everywhere. 
%The arguments below hold for all primes $\mathfrak{p}$ of $K$. 

For each point $a\in \mathcal{X}(\mathfrak{o}_v)$, let $\mathcal{U}_{\bar{a}}$ be the set $\{\beta\in \mathcal{X}(\mathfrak{o}_v): {\beta}_{\mathfrak{p}} = {a}_{\mathfrak{p}}\}$. 
By the argument in \cite{BGT15}, the completed local ring $\hat{\mathcal{O}}_{\bar{a}}$ is isomorphic to a formal
power series ring $\mathfrak{o}_v[[x_1, . . . , x_N]]$. 
For completeness, we include that proof here. Since
$a\in \mathcal{X}$ is smooth, the quotient $\hat{\mathcal{O}}_{\bar{a}}/(\pi)$ is regular. By the Cohen structure theorem for
regular local rings (see \cite{Coh46}, Theorem 9 or \cite{Mat89}, Theorem 29.7), 
the quotient ring $\hat{\mathcal{O}}_{\bar{a}}/(\pi)$ is
isomorphic to a formal power series ring of the form $\kappa_{\mathfrak{p}}[[y_1, . . . , y_N]]$. 
Choosing $x_i \in \hat{m}_v$ for
$i = 1,\dots, N$ such that the residue class of each $x_i$ is equal to $y_i$, we obtain a minimal basis
${\pi, x_1, . . . , x_N}$ for the maximal ideal $\hat{m}_v$ of $\hat{\mathcal{O}}_{\bar{a}}$. (see \cite{Coh46}) 

By the basic theory of ideals, 
there is a one-to-one correspondence between the points in 
$\mathcal{X}(\mathfrak{o}_v)$ that reduces to $\bar{a}$ and the primes $ \mathfrak{q}$ in $\mathcal{O}_{\bar{a}}$ such that $\mathcal{O}_{ \bar{a}}/\mathfrak{q}\cong \mathfrak{o}_v$. 
Passing to the completion, we have a one-to-one correspondence 
between the points in $\mathcal{X}(\mathfrak{o}_v)$ that reduces to $\bar{a}$ and 
the primes $\mathfrak{q}$ of $\hat{\mathcal{O}}_{\mathcal{X}, \bar{a}}$ such that 
$\mathfrak{q}$ of $\hat{\mathcal{O}}_{\mathcal{X}, \bar{a}}/ \mathfrak{q} \cong \mathfrak{o}_v$. 

Replacing $f$ by an iterate and replacing $a$ by a forward image under $f$, we may assume that $f$ maps $\mathcal{U}_{\tilde{a}}$ to itself. 
Then there is a $\mathfrak{p}$-adic analytic isomorphism $\iota: \mathcal{U}_{\tilde{a}} \rightarrow \mathfrak{o}_v^N$, 
and the restriction of $f|_{\mathcal{U}_{\tilde{a}} }$ 
can be conjugated to an analytic endomorphism defined over $\mathfrak{o}_v$. 
Denote by $F  =  (F_1, \dots, F_N)$ this function. 
% each Fi is congruent to a linear polynomial modulo p (in other words, all
%the coefficients of terms of degree greater than one are in the maximal
%ideal of Z
%p). 
Moreover, for each $i$, we have
\begin{equation}\label{iterate}
F_i(x_1, \dots, x_N) =  \frac{1}{\pi} H_i(\pi x_1,\dots, \pi x_N)
\end{equation}
for some $H_i \in \mathfrak{o}_v[[x_1,\dots,x_N]]$.
The advantage of conjugating by $(x_1,\dots,x_N)\mapsto (\pi x_1,\dots, \pi x_N)$ has the advantage that the coordinates of the initial point is mapped to the maximal ideal $\mathfrak{p}$.  
The reader might refer to \cite{bgt10}, pp.1658-1659 and \cite{BGT15}, pp.9-11 for more details.

We need the following theorem, which a modification to the analytic case of Theorem 11.11.3.1 of \cite{BGT16}.

\begin{proposition}\label{book}
Let $p\ge 3$ be a prime number, and let $N\ge 2$ be an integer. 
Suppose $Y $ be the $N$-dimensional unit disk $ \mathfrak{o}_v^N$ over $K_\mathfrak{p}$ 
and suppose
$F: Y\rightarrow Y$ is an algebraic endomorphism defined over $K_\mathfrak{p}$.  
Suppose $V'\subseteq Y$ is an analytic subvariety defined over $K_\mathfrak{p}$,  
and let $a\in Y(\mathfrak{o}_v)$ be a point. Assume the following conditions are verified: 

\begin{enumerate}
\item \label{ideal}
$a = (a_1,\dots, a_N)$ and each $a_i\in\mathfrak{p}$;
\item \label{congruence}
the endomorphism $F$ is given by 
$$(x_1,\dots, x_N) \mapsto (F_1(x),\dots, F_N(x)) $$
for analytic functions $F_i \in  \mathfrak{o}_v\langle x_1,\dots,x_n\rangle$ 
and furthermore for each $i = 1,\dots, N$ we have 
$$ F_i(x_1,\dots,x_n)  \equiv \sum_{j =1 }^N a_{ij} x_j\pmod {\mathfrak{p}}$$
with $a_{ij} \in \mathfrak{o}_v$, and the matrix $A: = (a_{ij})$ satisfies that $A^2 = A$;
%and the matrix $(a_{ij})$ is idempotent. 
\item 
\label{approx}
Define $G:\mathfrak{o}_v\rightarrow \mathfrak{o}_v^N$ such that $G(n) = g(a,n)$ where $g$ is the function defined in Theorem \ref{Bell}. There we have $\| G(n) - f^n(a)\| \le p^{-nc}$. 
Assume that there is no $M\in \mathbb{N}$ such that the function $G$ satisfies that $G(n)\in V'$ for all $n\ge M$; note that such a $G$ exists because of Condition \ref{congruence}. 
\item \label{conv}
for any $n\in \mathbb{N}$, the orbit $O_{F^n}(a)$ does not 
converge $\mathfrak{p}$-adically to a periodic point of $F$ lying on $V'$. 
%\item
%The composite of formal power series $F^m$ converges on the open unit disk for all $m$. 
\end{enumerate}
Then the set $S_{V'}:= \{ n\in \mathbb{N}_{\ge 0}:~F^n(a) \in V'(K_\mathfrak{p}) \}$
has the property that 
$$\#\{i\le n:~i\in S_{V'}\} = o(\log^{(m)}(n))$$
for any fixed $m\in \mathbb{N}$ (where $\log^{(m)}$ is the $m$-th iterated logarithmic function). 

\end{proposition}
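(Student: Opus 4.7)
The plan is to exploit the approximating analytic function $G$ from Theorem \ref{Bell} and analyze the composites $h_i := Q_i \circ G$ for analytic equations $Q_i$ cutting out $V'$, via a $p$-adic Weierstrass/Strassmann gap argument. First, pick $Q_1,\dots,Q_s \in \mathfrak{o}_v\langle x_1,\dots,x_N\rangle$ whose common zero locus is $V'$, and set $h_i(z) := Q_i(G(z))$, each an analytic function $\mathfrak{o}_v \to \mathfrak{o}_v$. Since the $Q_i$ have coefficients in $\mathfrak{o}_v$, one has the Lipschitz bound $|Q_i(y) - Q_i(z)| \le \|y - z\|$ on the unit ball $\mathfrak{o}_v^N$, so for any $n \in S_{V'}$, using $Q_i(F^n(a)) = 0$,
$$ |h_i(n)| = |Q_i(G(n)) - Q_i(F^n(a))| \le \|G(n) - F^n(a)\| \le p^{-nc}. $$
By Condition (\ref{approx}), at least one $h_i$ is not identically zero, for otherwise $G(n) \in V'$ for every $n$; fix such an $i$. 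By Strassmann's theorem applied to $h_i$, write $h_i(z) = \pi^a P(z) u(z)$, where $P(z) = \prod_{j=1}^k (z - z_j)$ is a distinguished polynomial with zeros $z_j$ in the closed unit ball of $\overline{K_\mathfrak{p}}$ and $u$ is a unit in the Tate algebra (so $|u(z)|$ is a nonzero constant on $\mathfrak{o}_v$). The preceding bound then becomes
$$ \prod_{j=1}^k |n - z_j| \le p^{-nc + O(1)} \qquad (n \in S_{V'}). $$

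The heart of the argument is a super-exponential gap principle. By pigeonhole, each $n \in S_{V'}$ satisfies $|n - z_{j(n)}| \le p^{(-nc + O(1))/k}$ for some $j(n) \in \{1,\dots,k\}$, so $S_{V'}$ partitions into $k$ classes $T_1,\dots,T_k$. For any two elements $n_1 < n_2$ in the same class $T_j$, both close to $z_j$, the ultrametric inequality gives $|n_2 - n_1|_p \le p^{(-n_1 c + O(1))/k}$, and translating to an archimedean bound yields $n_2 - n_1 \ge p^{(cn_1 - O(1))/k}$. Listing $T_j \cap [1,N]$ in increasing order as $n_1 < n_2 < \cdots < n_r$, iteration shows $n_r$ exceeds a tower of $p$-exponentials of height $r - 1$, so $r = O(\log^* N)$, where $\log^*$ denotes the iterated logarithm. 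Summing over the $k$ classes yields $|S_{V'} \cap [1,N]| = O(\log^* N) = o(\log^{(m)} N)$ for every fixed $m \in \mathbb{N}$, as required.

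The main obstacle is ensuring that at least one $h_i$ is nontrivial so that Strassmann's theorem delivers finitely many zeros; this is the combined role of Conditions (\ref{approx}) and (\ref{conv}). In particular Condition (\ref{conv}) rules out the degenerate scenario in which $F^n(a)$ accumulates $\mathfrak{p}$-adically at a periodic point of $F$ lying on $V'$, a case where the orbit fails to separate from $V'$ asymptotically and the approximation $\|G(n) - F^n(a)\| \le p^{-nc}$ would not yield usable sparsity for $S_{V'}$. Once a nontrivial $h_i$ is in hand, the Strassmann factorization and the iterated gap argument proceed routinely.
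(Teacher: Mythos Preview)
Your proof is correct and follows essentially the same approach as the paper: compose $G$ with a defining equation of $V'$, bound the resulting one-variable analytic function by $p^{-nc}$ on $S_{V'}$, and derive a super-exponential gap from its finitely many $p$-adic zeros. The only organizational differences are that the paper partitions $\mathfrak{o}_v$ into residue disks each containing at most one zero (rather than your Weierstrass/Strassmann factorization plus pigeonhole by nearest root) and separately treats the case where $G$ is constant, using the compatibility $F(G(n))=G(n+1)$ together with Condition~(\ref{conv}) to reach a contradiction, whereas you subsume that case directly under Condition~(\ref{approx}); your closing remarks on the role of Condition~(\ref{conv}) are therefore somewhat extraneous to your own line of argument.
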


\begin{remark}
We need condition \ref{approx} to apply Theorem \ref{Bell}. 
It can be satisfied under certain reductions, for more details see the proof of Theorem \ref{main}. 
\end{remark}

\begin{proof}
As in \cite{BGT16}, we may assume that $V'$ is irreducible, $S_{V'}$ is infinite, and the orbit $O_F(a)$ 
does not converge $\mathfrak{p}$-adically to a point in $Y$. %Now at the expense of replacing $F$ by an iterate $F^k$ 
%we may assume that $A: = (a_{ij})$ satisfies that $A^2 = A$. As in \cite{BGT16}, we may assume further that $A$ is not invertible or nilpotent, and with entries in $\mathfrak{o}_v$. 

By Theorem \ref{Bell} the function $G:\mathfrak{o}_v\rightarrow \mathfrak{o}_v^N$ satisfies that for each $n\in \mathbb{N}_{\ge 0}$, we have 
\begin{equation} \label{Jason}
\begin{aligned}
\| F^n (a) - G(n) \| \le p^{-nc}
\end{aligned}
\end{equation}
where $\|(x_1,\dots, x_n)\| = \max\{|x_1|_{\mathfrak{p}},\dots, |x_n|_{\mathfrak{p}} \}$. 
%As $f_i$'s have coefficients in $\mathfrak{o}_v$, 
We also have 
\begin{equation} \label{compatible}
F(G(n)) = G(n+1)
\end{equation}
for all $n$. 
%, as in \cite{BGT16}. 
There are two cases. 

\textbf{Case 1}. $G$ is not constant.

%then looking at the Zariski closure of the set $\{ G(n)\}_{n \ge M}$ and apply Proposition 3.1.2.14 of \cite{BGT16} we know that $V$ contains a positive dimensional periodic subvariety, contradicting Property (2). 

Look at the sets
$$S_{V',i}:= \{n\in S_{V'}:~n\equiv i \pmod {p^k} \}.$$
%We may assume that there are infinitely many $n$ such that $G(n) \notin V’(K_v)$. 
By Condition \ref{congruence} there is no $M\in \mathbb{N}$ such that for all $n\ge M$ we have $G(n) \in V’(K_\mathfrak{p})$. 
It follows that there exists an $Q$ vanishing on $V$ such that $L:= Q\circ G:\mathfrak{o}_v\rightarrow \mathfrak{o}_v$ is not identically zero. 
Since $Q\circ F^n(a) = 0$ for all $n\in S_{V‘,i}$, by Equation (\ref{Bell}) we have $|L(n)|_{\mathfrak{p}} \le p^{-nc}$ then. 
By discreteness of zeros of $\mathfrak{p}$-adic nonzero analytic functions we can cover $\mathfrak{o}_v$ by the disks 
$\bar{D}(i,p^{-k})$ with finitely many $i\in \mathfrak{o}_v$ for some $k>0$ such that there is at most one zero of $L$ 
in each of these disks. 

%By Claim 11.11.3.11 and Claim 11.11.3.12, we know that 
If there is no $\eta\in \bar{D}(i,p^{-k})$ with $L(\eta) = 0$, 
then $S_{V',i}$ is a finite set as (\ref{Bell}) implies that an accumulation point of $S_{V',i}$ will result in a zero of $L$ in $S_{V',i}$. 
If $S_{V',i}$ is an infinite set, 
and we list the elements in $S_{V',i}$ in increasing order as $\{n_j\}_{j\ge 1}$, 
Suppose $L(\eta) = 0$ with $\eta\in S_{V',i}$. 
Let $d$ be the order of vanishing of $L$ at $\eta$. 
Suppose 
$$ L(n) = a_{d} (n-\eta)^d + a_{d+1} (n-\eta)^{d+1}  + \dots$$
Then $|L(n_j)|\le p^{-n_j}$ implies that 
$$|n_j-\eta|_{\mathfrak{p}} \le p^{{-n_j/d} +O(1)}.$$ 
Suppose $S_{V',i} = \{n_1,n_2,\dots\}$. Then we also have
$$|n_{j+1} -\eta|_{\mathfrak{p}} \le p^{{-n_{j+1}/d} +O(1)}.$$ 
Combining them and by the triangle inequality we have 
$$|n_j-n_{j+1}|_{\mathfrak{p}} \le p^{{-n_j/d} +O(1)}.$$ 
Hence there exists $C>1$ 
such that for all sufficiently large $j$, 
we have $n_{j+1} - n_j \ge C^{n_j}$. 

\textbf{Case 2}. $G$ is constant. 

Suppose $G(n)=\beta$ identically. By (\ref{Bell}) we know that $F^n(a)$ converges to $\beta$, and by Equation (\ref{compatible}), $\beta$ is fixed under $F$. This contradicts Condition \ref{conv} above. 
\end{proof}

\begin{proof}[Proof of Theorem \ref{main}]

%First, if the subvariety $V$ is periodic under $f$, then $S_V$ is automatically 
%a finite union of arithmetic progressions. 
%Otherwise $V$ 

%First, if the sequence 

%then the set $S_V$ is not a finite union of arithmetic progressions. 

As in Section \ref{notation}, replacing $f$ by an iterate we may assume that all the periodic points $\gamma_1,\dots,\gamma_r$ are fixed points. 
%Theorem \ref{key} shows that Condition (4) of Proposition \ref{book} is true. 
Let $\mathcal{P}$ be as in Theorem \ref{key}. 
Fix a prime $\mathfrak{p}\in \mathcal{P}$. % such that all the linear coefficients of $f$ 
%are $\mathfrak{p}$-adic integers. 
We may assume further that the reduction $a_{\mathfrak{p}}$ is fixed under $f_{\mathfrak{p}}$. 
Fix an analytic isomorphism $\iota: \mathcal{U}_{\hat{a}} \rightarrow Y$ 
where $\mathcal{U}_{\bar{a}}=\{\beta\in \mathcal{X}(\mathfrak{o}_v): \bar{\beta} = \bar{a}\}$ as under the statement of Theorem \ref{Bell} and $Y$ is the $N$-dimensional unit disk $\mathfrak{o}_v^N$. 
Then we obtain a map $F:Y\rightarrow Y$ as in Proposition \ref{book}.

%Let $V'$ be the image of $V$ in $\mathcal{U}_{\gamma}$. 
By affine changes of coordinate and replacing $f$ by an iterate, we can assume that Conditions \ref{ideal} and \ref{congruence} of Proposition \ref{book} are verified, as we shall show. 
More precisely, 
suppose 
$$F_i(x) = b_0 + \sum_{j=1}^N b_{j}x_j + \sum_{|n|:= n_1 + \dots + n_N \ge 2} b_{n_1,\dots,n_N} x_1^{n_1}\dots x_N^{n_N}$$
where we omit the subindex $i$. 
Look at the orbit of any $\eta\in\mathfrak{o}_v^N$ under $F$ modulo $\pi^2$: 
$\overline{\eta}, \overline{f(\eta)}, \overline{f^2(\eta)}, \dots$
Replacing $\overline{\eta}$ by a forward image we may that $\overline{\eta}$ is periodic. Replace $F$ by an iterate we may assume that $\overline{\eta}$ is fixed. Choose any $\eta$ such that its residue class modulo $\pi^2$ is $\overline{\eta}$. 
Under these assumptions above, if we do the translation $\sigma_1: x\mapsto x-\eta$ the constant terms of $\sigma_1^{-1}\circ F\circ \sigma_1$ will be divisible by $\pi^2$, and all coefficients of $\sigma_1^{-1}\circ F\circ \sigma_1$ will still belong to $\mathfrak{o}_v$. 
Replacing $F$ by $\sigma_1^{-1}\circ F\circ \sigma_1$ we may assume that the constant terms of $F$ are divisible by $\pi^2$ and all the coefficients of $F$ belong to $\mathfrak{o}_v$.

\begin{comment} 
These changes won't affect the linear terms of $f$.
\end{comment}
After that we make a change a variable of the form 
$$\sigma:(a_1,\dots,a_N)\mapsto (\pi a_1,\dots, \pi a_N)$$
then the conjugated map is
$$ \sigma^{-1}\circ F_i\circ\sigma(x) = \frac{b_0}{\pi} + \sum_{j=1}^N b_{j}x_j + \sum_{|n|= n_1 + \dots + n_N \ge 2} \pi^{|n|-1} \cdot b_{n_1,\dots,n_N}  x_1^{n_1}\dots x_N^{n_N}.$$ 
%with some sufficiently large integer $k$
This conjugation makes the terms of $F$ with degree at least $2$ divisible by $\pi$ and yet the constant term is still divisible by $\pi$. 
Now again replacing by $F$ by an iterate we may assume that the matrix $A$ of the linear part of $F$ satisfies $A^2 = A$. This enables us to apply Proposition \ref{book}. 

Furthermore as we have observed, after the transformation
$$(a_1,\dots,a_N)\mapsto (\pi a_1,\dots, \pi a_N)$$ 
%with $K'$ sufficiently large
our configuration will meet the requirement of Condition \ref{ideal}.  
%Also an affine transformation $x\mapsto p^k x$ (as before, with $p$ divisible by $\mathfrak{p}$) for some $k>0$ to 
%Equation (\ref{iterate}) guarantees Condition (5). 

By Theorem \ref{key} we know that $\mathfrak{p}$-adically any subsequence of $\{f^n(a)\}_{n\ge 0}$ is not convergent to any of the periodic points on $V$. 
It is equivalent to the statement that 
$F^n(\iota(a))$ does not converge to any of the periodic points on $V'$.  
In other words, Condition \ref{conv} of Proposition \ref{book} is satisfied. 
%Now it suffices to check Condition (2) of Theorem \ref{book}. It is trivially true as $V$ is a curve. 
Condition \ref{approx} is satisfied as otherwise the Zariski closure of $\{\iota^{-1}(G(n))~|~n\ge M\}$ would be a positive-dimensional periodic subvariety of $V$. 
Then we can apply Proposition \ref{book} to conclude the desired gap principle.
\end{proof}

\section*{Acknowledgement}

The author would like to express his gratitude to his advisor Thomas Tucker for suggesting this topic and for many useful discussions. 
The author also wants to thank Dragos Ghioca and Joseph Silverman for useful suggestions.

\bibliography{refFile}{}
\bibliographystyle{amsalpha}

\end{document}